\documentclass[letter, 11 pt]{amsart}

\usepackage[hyphens]{url} \urlstyle{same}
\usepackage{amsthm}
\usepackage{graphicx}
\usepackage{bbm}
\usepackage[english]{babel}
\usepackage{fancyhdr}
\usepackage[T1]{fontenc}
\usepackage[utf8]{inputenc}
\usepackage{mathtools}
\usepackage{amsmath,amsfonts}
\usepackage{enumitem}
\usepackage{mathrsfs}
\usepackage{textcomp}
\usepackage{tikz-cd}
\usepackage{polski}
\usepackage{array}
\usepackage{amssymb}
\usepackage{thmtools}
\usepackage[new]{old-arrows}
\usepackage[bindingoffset=0.2in,%
            left=1.3in,right=1.3in,top=1.2in,bottom=1.2in,%
            footskip=.25in]{geometry}

\graphicspath{ {./images/} }

\setcounter{tocdepth}{2}
\setcounter{secnumdepth}{4}

\theoremstyle{plain}
\newtheorem{thm}{Theorem}[section]
\newtheorem{prop}[thm]{Proposition}

\newtheorem{lem}[thm]{Lemma}
\newtheorem{cor}[thm]{Corollary}

\newtheorem{dfn}[thm]{Definition}
\newtheorem{rem}[thm]{Remark}
\numberwithin{equation}{section}
\setlength{\textwidth}{30cc} \setlength{\textheight}{45cc}

\newtheoremstyle{sltheorem}
{}                
{}                
{\upshape}        
{}                
{\bfseries}       
{.}               
{ }               
{}                
\theoremstyle{sltheorem}
\newtheorem{exm}{Example}[section]

\newcommand{\R}{\mathbb{R}}
\newcommand{\C}{\mathbb{C}}
\newcommand{\Z}{\mathbb{Z}}

\newcommand{\M}{\mathcal{M}}

\newcommand{\s}{\mathfrak{s}}

\newcommand{\SU}{\mathfrak{su}(2)}

\newcommand{\G}{\mathcal{G}}
\newcommand{\A}{A^{\text{t}}}
\newcommand{\B}{B^{\text{t}}}

\DeclareMathOperator{\im}{im}

\DeclareMathOperator{\supp}{supp}

\DeclareMathOperator{\Met}{Met}
\DeclareMathOperator{\id}{id}
\DeclareMathOperator{\Int}{Int}

\DeclareMathOperator{\spin}{spin}
\DeclareMathOperator{\ind}{ind}

\DeclareMathOperator{\sign}{sign}

\DeclareMathOperator{\tr}{tr}

\DeclareMathOperator{\End}{End}

\DeclareMathOperator{\grad}{grad}
\DeclareMathOperator{\gr}{gr}

\DeclareMathOperator{\cl}{cl}
\DeclareMathOperator{\Red}{red}

\DeclareMathOperator{\PDL}{PD}

\mathchardef\mhyphen="2D

\begin{document}
\title[Fiber Sum for $\lambda_{SW}$]{Fiber Sum Formulae for the Casson-Seiberg-Witten Invariant of Integral Homology $S^1 \times S^3$}
\author{Langte Ma}
\address{MS 050 Department of Mathematics, Brandeis University, 415 South St., Waltham MA 02453}
\email{ltmafixer@brandeis.edu}

\maketitle

\begin{abstract}
We prove the additivity of the Casson-Seiberg-Witten invariant of integral homology $S^1 \times S^3$ under fiber sum along embedded curves and embedded tori, which is the $4$-dimensional analogue of the additivity of the Casson invariant under connected-sum and splicing along knots. 
\end{abstract}
\section{Introduction}

 In \cite{Lim1} Lim interpreted the Casson invariant for an integral homology sphere as the counting of irreducible monopoles corrected by the eta invariants of the Dirac operator and signature operator. Following the same scheme Mrowka-Ruberman-Saveliev introduced the Casson-Seiberg-Witten invariant $\lambda_{SW}$ for an integral homology $S^1 \times S^3$ as a $4$-dimensional analogue of the Casson invariant in \cite{MRS}. One of the prominent feature of the Casson invariant is that it interacts nicely with respect to topological operations, i.e. surgery, connected-sum, splicing etc. In this paper we would like to prove the analogous properties of the Casson-Seiberg-Witten invariant corresponding to splicing and connected-sum in the $3$-dimensional case, which we refer to as fiber sums.

More precisely let $(X_1, \mathcal{T}_1)$ and $(X_2, \mathcal{T}_2)$ be two sets of data such that $X_i$ is an integral homology $S^1 \times S^3$, $\mathcal{T}_i \subset X_i$ is an embedded torus with the map $H_1(\mathcal{T}_i; \Z) \to H_1(X_i; \Z)$ a surjection, $i=1, 2$. After fixing a framing of $\mathcal{T}_i$, i.e. an identification of a tubular neighborhood $\nu(\mathcal{T}_i)$ of $\mathcal{T}_i$ as $D^2 \times T^2$, we get a basis $\{ \mu_i, \lambda_i, \gamma_i\}$ for $H_1(\partial \nu(\mathcal{T}_i); \Z)$, where $\gamma_i$ is chosen to represent a generator of $H_1(X_i; \Z)$. Then the fiber sum of $(X_1, \mathcal{T}_1)$ and $(X_2, \mathcal{T}_2)$ is obtained by gluing the complement $X_1 \backslash \nu(\mathcal{T}_1)$ and $ X_2 \backslash \nu(\mathcal{T}_2)$ using the diffeomorphism on $T^3$ represented by the matrix
\[
\varphi_{\mathcal{T}}=
\begin{pmatrix}
0 & 1 & 0 \\
1 & 0 & 0 \\
0 & 0 & 1
\end{pmatrix}
\]
under the basis $\{\mu_i, \lambda_i, \gamma_i\}$. We will give a more detailed description of the construction in Section \ref{set-up}. 

\begin{thm}\label{main1}
The Casson-Seiberg-Witten invariant is additive under fiber sum along tori, i.e.
\begin{equation}
\lambda_{SW}(X)= \lambda_{SW}(X_1) + \lambda_{SW}(X_2),
\end{equation}
where $X$ is the fiber sum of $X_1$ and $X_2$ along $\mathcal{T}_1$ and $\mathcal{T}_2$.
\end{thm}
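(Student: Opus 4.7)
The plan is to establish the additivity by a neck-stretching analysis applied to both contributions defining $\lambda_{SW}$: the signed count of irreducible monopoles and the $\eta$-invariant correction. Equip each $X_i$ with a metric $g_i$ that is isometrically a product on a collar of $\mathcal{T}_i$, i.e.\ $\nu(\mathcal{T}_i) \cong T^3 \times (-\epsilon,\epsilon)$, and form a one-parameter family of metrics $g(T)$ on the fiber sum $X$ by inserting a long cylindrical neck $T^3 \times [-T, T]$ along the gluing locus. Since $\lambda_{SW}(X)$ is independent of metric and admissible perturbation, it suffices to compute it for $g(T)$ in the limit $T \to \infty$, where $X$ analytically decouples into two cylindrical-end manifolds $X_i^\circ := X_i \setminus \mathrm{int}(\nu(\mathcal{T}_i))$ sharing a common $T^3$ end.

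The first step is to describe the asymptotic data on $T^3$. For the $\text{Spin}^c$ structure pulled back from $X_i$, all Seiberg--Witten solutions on flat $T^3$ are reducible and form a Morse--Bott critical set, namely a torus of flat $\text{Spin}^c$ connections modulo gauge. The hypothesis that $H_1(\mathcal{T}_i;\Z) \to H_1(X_i;\Z)$ is surjective ensures that the asymptotic limit of any finite-energy monopole on $X_i^\circ$ is a single canonical reducible determined by the $\text{Spin}^c$ structure on $X_i$. Standard neck-stretching compactness for monopoles on long necks (in the style of Kronheimer--Mrowka) then shows that any sequence of irreducible solutions on $(X, g(T))$ converges along a subsequence to a pair of finite-energy monopoles on $X_1^\circ$ and $X_2^\circ$ whose asymptotic limits in $\mathcal{M}(T^3)$ agree after the identification by $\varphi_\mathcal{T}$; a converse gluing theorem produces, for $T$ sufficiently large, a signed bijection between irreducibles on $(X, g(T))$ and such compatible pairs. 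Since the canonical asymptotic reducible is unique, this bijection reduces to a disjoint union: an irreducible monopole on one piece paired with the flat extension on the other, in correspondence with an irreducible on the closed $X_i$ obtained by capping off $T^3\times [0,\infty)$ with $\nu(\mathcal{T}_i) \cong D^2 \times T^2$. This yields $\#\mathcal{M}^*(X, g(T)) = \#\mathcal{M}^*(X_1, g_1) + \#\mathcal{M}^*(X_2, g_2)$.

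The $\eta$-invariant correction $w(X, g, \eta)$, built from the Dirac and signature operators, is handled by the APS-type cut-and-paste principle for long necks. The adiabatic splitting formulas give
\[
\eta_{\mathrm{Dir}}(X(T)) + \eta_{\mathrm{sign}}(X(T)) = \sum_{i=1,2} \bigl(\eta_{\mathrm{Dir}}(X_i^\circ) + \eta_{\mathrm{sign}}(X_i^\circ)\bigr) + o(1)
\]
as $T \to \infty$, with the $T^3$ boundary contributions vanishing: the reduced Dirac $\eta$-invariant on a flat $T^3$ with trivial $\text{Spin}^c$ structure is zero by the symmetry of the Dirac spectrum, and the signature contribution vanishes because $T^3$ bounds. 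A cap-off identifies the cylindrical-end $\eta$-invariants with those of the closed $X_i$, yielding the additivity of $w$ and hence of $\lambda_{SW}$.

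The main obstacle will be the analytic control of the neck-stretching limit in the presence of the Morse--Bott torus of reducibles on $T^3$. Achieving transversality of the irreducible moduli against this critical set requires a delicate choice of admissible perturbation, combining holonomy-type perturbations supported near the neck with interior perturbations adapted to the Morse--Bott degeneracy; and the verification that configurations which are irreducible on \emph{both} pieces do not contribute to the $0$-dimensional signed count rests on a precise index computation forcing the expected dimension of the fiber product $\mathcal{M}^*(X_1^\circ) \times_{\mathcal{M}(T^3)} \mathcal{M}^*(X_2^\circ)$ to be strictly positive for generic parameters, so that such mixed configurations form positive-dimensional families which disappear after perturbation.
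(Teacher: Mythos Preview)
Your overall architecture---neck-stretching to split both the irreducible count and the correction term---matches the paper's, but both halves of your argument contain genuine gaps.

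\textbf{The correction term is not an $\eta$-invariant.} In this paper $\omega(X,g,\beta)$ is defined as $\ind_{\C} D^+_\beta(Z_+,g) + \sigma(Z)/8$, where $Z_+$ is a spin $4$-manifold with a \emph{periodic} end modeled on $X$ (Section~2.1). This is an end-periodic index, not a combination of Dirac and signature $\eta$-invariants; the APS adiabatic splitting you invoke simply does not apply to this quantity. The paper instead proves additivity of $\omega$ via an excision principle for Dirac operators on end-periodic manifolds (Theorem cited from \cite{M1}, used in Proposition~\ref{aic1}): one constructs explicit end-periodic pieces $Z_{i,+} = P_i \cup Q_i$ and swaps them, tracking both the index and the signature of the compact part $Z$. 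Your sketch would need an entirely different argument here, or else a separate theorem identifying $\omega$ with an $\eta$-type quantity amenable to APS splitting---neither of which is available in the paper.

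\textbf{The asymptotic reducible on $T^3$ is not unique.} The surjection $H_1(\mathcal{T}_i;\Z)\to H_1(X_i;\Z)$ does \emph{not} force a unique asymptotic limit on $M_{i,o}$: the restriction $H^1(M_i;\R)\to H^1(T^3;\R)$ is injective with two-dimensional image, so flat spin$^c$ connections on $M_i$ form a $2$-torus. Your ``disjoint union'' picture---an irreducible on one side paired with \emph{the} flat extension on the other---rests on this false uniqueness. The paper avoids the Morse--Bott issue altogether by perturbing via a Morse function on the Picard torus of $T^3$ (following \cite[Section~37]{KM1}), obtaining isolated critical points $x_i, y^j_i, z^j_i, w_i$, and then computing the relevant $0$-dimensional moduli spaces $\M([\mathfrak{a}], N_o)$ and $\M(N_o,[\mathfrak{a}])$ explicitly (Propositions~\ref{dim0} and~\ref{dim01}). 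The additivity of the irreducible count (Proposition~\ref{cirr}) then comes from matching these counts via the gluing theorem; the case you flag as the ``main obstacle'' (irreducibles on both sides) is ruled out not by a positive-dimension argument but by the grading computation showing the relevant broken trajectories are empty (Lemma~36.1.1 of \cite{KM1}).
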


The fiber sum along curves are defined as follows. Let $(X_1, \gamma_1)$ and $(X_2, \gamma_2)$ be two sets of data such that $X_i$ is an integral homology $S^1 \times S^3$, $\gamma_i \hookrightarrow X_i$ is an embedded simple closed curve such that $[\gamma_i]$ generates $H_1(X_i; \Z)$. We denote by $M_i$ the closure of the complement of a tubular neighborhood of $\gamma_i$ in $X_i$. The fiber sum $X$ of $X_1$ and $X_2$ is obtained by gluing together $M_1$ and $M_2$ using the identity map after fixing framings of the neighborhoods of $\gamma_i$.   

\begin{thm}\label{main2}
The Casson-Seiberg-Witten invariant is additive under fiber sum along curves, i.e.
\begin{equation}
\lambda_{SW}(X)= \lambda_{SW}(X_1) + \lambda_{SW}(X_2),
\end{equation}
where $X$ is the fiber sum of $X_1$ and $X_2$ along $\gamma_1$ and $\gamma_2$.
\end{thm}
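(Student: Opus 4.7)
My plan is to mirror the proof of Theorem~\ref{main1}, exploiting the fundamental simplification that the gluing region $S^1 \times S^2$ admits a product positive-scalar-curvature (PSC) metric. This should make the neck analysis dramatically cleaner than in the torus case, since the PSC Weitzenböck estimate rules out irreducible finite-energy Seiberg--Witten solutions on the infinite cylinder $S^1 \times S^2 \times \mathbb{R}$, and the reducible moduli on $S^1 \times S^2$ consists of a single $U(1)$-orbit rather than a Jacobian torus of flat reducibles over $T^3$.

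I equip $X$ with a family of metrics $g_T$ carrying an isometric neck $S^1 \times S^2 \times [-T, T]$ of length $2T$ and run the standard neck-stretching compactness and gluing package. As $T \to \infty$, each irreducible monopole on $(X, g_T)$ converges to a pair of finite-energy monopoles on the cylindrical-end manifolds $M_i^+ = M_i \cup (S^1 \times S^2 \times [0,\infty))$, both asymptotic to the unique reducible on the neck. A converse gluing theorem, simplified because the reducible limit is nondegenerate and unique up to its $U(1)$-stabilizer, yields for $T$ large the identification
\[
\mathcal{M}^*(X, g_T) \cong \mathcal{M}^*(M_1^+) \sqcup \mathcal{M}^*(M_2^+),
\]
the disjoint union arising because an irreducible on $X$ must be irreducible on exactly one $M_i^+$ and reducible on the other in the $T \to \infty$ limit. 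Filling each $M_i^+$ back in by $\nu(\gamma_i) = S^1 \times D^3$ equipped with its round PSC metric recovers $X_i$ without creating new irreducibles, so $\#\mathcal{M}^*(M_i^+) = \#\mathcal{M}^*(X_i)$ with orientations inherited from the conventions of Theorem~\ref{main1}.

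I then handle the correction term $w$ assembled from eta invariants of the Dirac and signature operators by applying the Atiyah--Patodi--Singer splitting theorem simultaneously along the neck $\{0\} \times S^1 \times S^2 \subset X$ and along the analogous copies inside each $X_i$. The comparison reduces $w(X) - w(X_1) - w(X_2)$ to kernel/cokernel contributions on the PSC caps (which vanish because the Dirac operator has no kernel on $S^1 \times D^3$ with round metric) and to boundary eta terms on $S^1 \times S^2$ that cancel using the spectral symmetry of the Dirac operator on a product of $S^1$ with a round $S^2$, inherited from the chirality involution on $S^2$. Combining this with the monopole count above yields the asserted additivity $\lambda_{SW}(X) = \lambda_{SW}(X_1) + \lambda_{SW}(X_2)$.

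The main obstacle I anticipate is the simultaneous wall-crossing bookkeeping for the reducible loci. Since $b_+(X) = b_+(X_i) = 0$, the reducibles form nontrivial loci in the configuration space, and metric deformations within the family $g_T$ can move across walls; one must verify that the wall-crossing contributions in $\#\mathcal{M}^*$ and in $w$ cancel consistently as $T$ varies, using the perturbation scheme underlying the definition of $\lambda_{SW}$ in \cite{MRS}. A secondary difficulty is computing the orientation of the gluing for an irreducible--plus--reducible pair through the unique reducible limit; this should reduce to a determinant-line calculation parallel to the one carried out in Theorem~\ref{main1}, but requires a careful match of trivializations across the $S^1 \times S^2$ neck.
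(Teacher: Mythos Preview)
Your overall architecture---neck-stretch to separate the monopole count, then handle the correction term---matches the paper's, but two of your key simplifying claims are wrong, and the second one is fatal to your proposed treatment of the correction term.

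First, the assertion that ``the reducible moduli on $S^1 \times S^2$ consists of a single $U(1)$-orbit'' is false: $b_1(S^1\times S^2)=1$, so the Picard torus of flat spin$^c$ connections is a \emph{circle}, not a point. You therefore cannot speak of ``the unique reducible on the neck''; any finite-energy monopole on $M_i^+$ limits to some point on this circle, and different monopoles can limit to different points. The paper deals with this by perturbing with a Morse function on the circle to obtain two isolated flat critical points $u,v$, and then analyzing in the blown-up configuration space the doubly infinite towers $u_i,v_i$ over each. The gluing identification $\#\M(X_T)=\#\M(M_{1,o},v_0)+\#\M(u_{-1},M_{2,o})$ (and the matching identities $\#\M(X_{i,T})=\#\M(M_{i,o},v_0)$ resp.\ $\#\M(u_{-1},M_{i,o})$) requires computing which of $u_{-1},v_0$ actually supports a nonempty $0$-dimensional moduli space over the cap $N_o\cong S^1\times D^3$; this is where the PSC on the cap enters, not in collapsing the limit set to a point. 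Your disjoint-union formula is morally the right answer, but the route you sketch to it does not exist.

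Second, and more seriously, your description of the correction term as ``assembled from eta invariants of the Dirac and signature operators'' is not the definition used in \cite{MRS} or in this paper. Here $\omega(X,g,\beta)=\ind_{\C}D^+_\beta(Z_+,g)+\sigma(Z)/8$, where $Z_+$ is a $4$-manifold with a \emph{periodic end} modeled on $X$. Standard APS splitting along a compact hypersurface $S^1\times S^2\subset X$ does not compare these end-periodic indices for three different periods $X,X_1,X_2$: cutting $X$ along $S^1\times S^2$ changes the period of the end, not just a compact piece, and there is no APS boundary term that captures this. The paper instead invokes the excision principle for Dirac operators on end-periodic manifolds proved in \cite{M1}, which is designed exactly for this situation and reduces $\ind D^+(Z_{1,+})+\ind D^+(Z_{2,+})=\ind D^+(Z_+)$ to the vanishing of the index on a PSC piece $D^4_+$. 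Your proposed ``spectral symmetry of the Dirac operator on $S^1\times S^2$'' cancellation is not the mechanism at work, and without the end-periodic excision tool you have no way to compare the three correction terms.
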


In the product case when $X_i=S^1 \times Y_i$ with $Y_i$ an integral homology sphere, the Casson-Seiberg-Witten invariant $\lambda_{SW}(X_i)$ reduces to the Casson invariant $-\lambda(Y_i)$ (c.f. \cite{MRS}). If we take knots $K_i \subset Y_i$, we get embedded tori $\mathcal{T}_i=S^1 \times K_i \subset X_i$. The fiber sum of $(X_1, \mathcal{T}_1)$ and $(X_2, \mathcal{T}_2)$ is the product $S^1 \times Y_1 \#_{K} Y_2$, where $ Y_1 \#_{K} Y_2$ is the splicing of $Y_1$ and $Y_2$ along knots $K_1$ and $K_2$. Thus Theorem \ref{main1} recovers the additivity of the Casson invariant under splicing along knots. If we fix a point $y_i \in Y_i$, and choose $\gamma_i=S^1 \times \{y_i\}$, the fiber sum of $X_1$ and $X_2$ along $\gamma_i$ is $X=S^1 \times (Y_1 \# Y_2)$, where the connected sum takes place at $y_1$ and $y_2$. In this viewpoint Theorem \ref{main2} recovers the additivity of the Casson invariant. 

The paper is organized as follows. In Section \ref{pre} we recall necessary background Seiberg-Witten theory for the proof. Then in Section \ref{CIM} we make use of the machinery developed by Kronheimer-Mrowka in \cite{KM1} to prove the additivity of counting for irreducible monopoles by running the neck-stretching argument. Section \ref{CICT} uses the technique developed in \cite{M1} to prove the additivity of the correction term in the definition of the Casson-Seiberg-Witten invariant. Finally in Section \ref{app} we present some examples where the fiber sums arise in integral homology $S^1 \times S^3$. 

\subsection*{Acknowledgement} The author would like to express his gratitude to Daniel Ruberman, Youlin Li, Jianfeng Lin, and McKee Krumpak for generously sharing their expertise. 

\section{Preliminaries}\label{pre}

\subsection{The Casson-Seiberg-Witten Invariant}
We briefly review the definition of the Casson-Seiberg-Witten invariant $\lambda_{SW}$. For more details, one should consult the original paper  \cite{MRS}. 

Let $(X, g)$ be a Riemannian smooth closed oriented $4$-manifold with the same integral homology as that of $S^1 \times S^3$, i.e. $H_*(X; \Z) \cong H_1(S^1 \times S^3; \Z)$. Let $\s=(W, \rho)$ be the unique spin$^c$ structure on $X$, where $\rho: T^*X \to \End(W)$ is the Clifford multiplication, and $W=W^+ \oplus W^-$ is a $\C^4$-bundle over $X$. Let's fix an positive integer $k \geq 2$. We write $\mathcal{A}_k(X, \s)$ for the set of $L^2_k$ spin$^c$ connections on $W$. The blown-up configuration space is 
\begin{equation}
\mathcal{C}_k^{\sigma}(X, \s):=\{(A, s, \phi) \in \mathcal{A}_k(X, \s) \times \R \times L^2_k(W^+): s \geq 0, \|\phi\|_{L^2}=1\}.
\end{equation}
The gauge group is $\mathcal{G}_{k+1}(X, \s):= L^2_{k+1}(X, S^1)$ consisting of $L^2_{k+1}$-maps from $X$ to $S^1$, whose action on $\mathcal{C}^{\sigma}_k$ is given by 
\begin{equation}
u \cdot (A, s, \phi) = (A- u^{-1}du \otimes 1_{W^+}, s, u \phi).
\end{equation}
The quotient configuration space is denoted by $\mathcal{B}^{\sigma}_{k}(X, \s):= \mathcal{C}^{\sigma}_k / \G_{k+1}$. Let $\mathcal{P}(X):=L^2_k(T^*X \otimes i\R)$ be the space of perturbations. The Seiberg-Witten map is defined as 
\begin{equation}
\begin{split}
\mathfrak{F}^{\sigma}_{\beta}: \mathcal{C}^{\sigma}_k & \longrightarrow L^2_{k-1}(\Lambda^+ T^*X \otimes i\R) \oplus L^2_{k-1}(X, W^-) \\
(A, s, \phi) & \longmapsto ({1 \over 2}F^+_{\A} - s^2\rho^{-1}(\phi\phi^*)_0 -2d^+\beta, D^+_A \phi),
\end{split}
\end{equation}
where $\A$ is the connection on $\det W^+$ induced by $A$, $(\phi \phi^*)_0 = \phi \otimes \phi^* - {1 \over 2}\tr( \phi \otimes  \phi^*) \in i \SU(W^+)$, and $D^+_A: L^2_{k-1}(W^+) \to L^2_{k-1}(W^-)$ is the Dirac operator. The blown-up moduli space
\begin{equation}
\M_{g, \beta}(X, \s):= \{ [A, s, \phi] \in \mathcal{B}^{\sigma}_k: \mathfrak{F}^{\sigma}_{\beta}(A, s, \phi)=0 \}
\end{equation}
is an oriented compact $0$-manifold for a generic pair $(g, \beta)$ (c.f. \cite{MRS}). The counting of the moduli space $\# \M_{g, \beta}(X, \s)$ forms the first part of the definition of $\lambda_{SW}(X)$. 

The second part of $\lambda_{SW}(X)$ is given by an index correction term $\omega(X, g, \beta)$ defined as follows. We fix a generator $1_X \in H^1(X; \Z)$ which is represented by a smooth map $f: X \to S^1$. Let $Y \subset X$ be an embedded hypersurface with $[Y] = \PDL 1_X \in H_3(X; \Z)$. $Y$ is called a generating hypersurface of $X$. Note that the unique spin$^c$ structure on $X$ is given by spin structures on $X$, of which we fix one now. We denote by $\mathfrak{t}$ the spin structure on $Y$ induced from $\s$. Cutting $X$ along $Y$ results in a spin cobordism $W: Y \to Y$. Let $(Z, \s)$ be an arbitrary spin $4$-manifold with spin boundary $(Y, \mathfrak{t})$. Then we form a spin $4$-manifold with a periodic end: 
\[
Z_+(X):= Z \cup W_0 \cup W_1 \cup ...,
\]
where each $W_i$ is a copy of $W$. Lifting and extending the pair $(g, \beta)$ to $Z_+$ arbitrarily, we can consider the twisted Dirac operator on $Z_+$: 
\begin{equation}\label{pdo}
D^+_{\beta}(Z, g):= D^+(Z_+, \s, g)+\rho(\beta): L^2_1(Z_+, W^+) \to L^2(Z_+, W^-). 
\end{equation}

\begin{dfn}\label{R1}
 We call a pair $(g, \beta) \in \Met(X) \times \mathcal{P}(X)$ regular if the family of Dirac operators 
\begin{equation}\label{1.6.1}
D^+_{z, \beta}(X, \s): =D^+(X, \s) +\rho(\beta- \ln z \cdot df), \; |z|=1,
\end{equation}
have trivial kernel. 
\end{dfn}

\begin{rem}\label{reg} 
Let's fix a metric $g$ on $X$. $\{ D^+_{z, \beta}(X, \s) \}$ forms an $S^1$-family of complex Fredholm operators of index $0$. The stratum consisting of operators with nontrivial kernel in this space has real codimension $2$ (c.f. \cite{K}). Thus for a generic choice of perturbation $\beta$, this family consists of invertible operators. For this reason given any metric $g$, varying perturbations suffices to give us regular pairs $(g, \beta)$. 
\end{rem}

Here $\ln z$ is defined by choosing a branch. For different choice of branches, the above operators in (\ref{1.6.1}) differ by a conjugation of $e^{2\pi k f}$. Thus it is well-defined for $(g, \beta)$ to be regular. It's proved in \cite[Proposition 2.2]{MRS} that being regular is a generic property. Theorem 3.1 in \cite{MRS} says that for any regular pair $(g, \beta)$ the twisted Dirac operator $D^+_{\beta}(Z, g)$ is Fredholm, thus has index defined. We let 
\begin{equation}
\omega(X, g, \beta):=\ind_{\C}D^+_{\beta}(Z_+, g)+{\sigma(Z) \over 8},
\end{equation}
where $\sigma(Z)$ is the signature of $Z$, be the index correction term. Then the Casson-Seiberg-Witten invariant for $X$ is defined as 
\begin{equation}
\lambda_{SW}(X):= \# \M_{g, \beta}(X, \s) - \omega(X, g, \beta),
\end{equation}
where $(g, \beta)$ is a regular pair. One of the main results of \cite{MRS} is that $\lambda_{SW}(X)$ is independent of the choice of the regular pair $(g, \beta)$. We note here that if we change the sign of the generator $1_X \in H^1(X;\Z)$, $\lambda_{SW}(X)$ switches its sign. 

\subsection{Fiber Sums}\label{set-up}
Here we give a detailed description of the fiber sums. 

\subsubsection{Fiber Sum along Tori}

Let $\iota: T^2 \hookrightarrow X$ be an embedding of a torus in a integral homology $S^1 \times S^3$ with the property that the induced map $\iota_*: H_1(T^2; \Z) \to H_1(X; \Z)$ is surjective. Denote by $\mathcal{T} :=\im \iota$ the image of $T^2$ in $X$, which is a trivial disk bundle over $T^2$ due to the vanishing of the intersection form of $X$. Let $M=\cl (X \backslash \nu(\mathcal{T}))$ be the closure of the complement of $\nu(T)$. Under a fixed framing $\nu(\mathcal{T}) \cong D^2 \times T^2$ we get a canonical choice of a triple of curves which form a basis of $H_1(\partial \nu(\mathcal{T}) ;\Z)$: 
\[
\mu :=\partial D^2 \times \{pt. \} \times \{ pt.\}, \lambda :=\{pt. \} \times S^1 \times \{pt.\}, \gamma:=\{pt\} \times \{pt.\} \times S^1.
\]
We choose a framing so that 
\begin{equation}\label{fram}
[\lambda] \in \ker i_* \text{ and } 1_X \cdot [\gamma] =1,
\end{equation}
where $i_*: H_1(\partial \nu(\mathcal{T}); \Z) \to H_1(M; \Z)$ is induced by the inclusion map. 

\begin{dfn}\label{dfn1}
Let $(X_1, \mathcal{T}_1)$ and $(X_2, \mathcal{T}_2)$ be two sets of data as above with fixed framings satisfying (\ref{fram}). The fiber sum of $(X_1, \mathcal{T}_1)$ and $(X_2, \mathcal{T}_2)$ is the manifold given by 
\[
X_1 \#_{\mathcal{T}} X_2=M_1 \cup_{\varphi_{\mathcal{T}}} M_2,
\]
where $\varphi_{\mathcal{T}}: \partial M_2 \to \partial M_1$ is an orientation-reversing diffeomorphism whose isotopy class in $-SL(3; \Z)$ is given by the matrix
\[
\varphi_{\mathcal{T}}=
\begin{pmatrix}
0 & 1 & 0 \\
1 & 0 & 0 \\
0 & 0 & 1
\end{pmatrix}
\]
under the basis $\{\mu_i, \lambda_i, \gamma_i\}$ of $H_1(\partial \nu(\mathcal{T}_i); \Z)$, $i=1, 2$. 
\end{dfn}

\begin{rem}
In practice we will write $X_1=M_1 \cup N_1$, $X_2=N_2 \cup M_2$, which means we identify $\partial M_1=-\partial N_1=T^3$, $\partial N_2 = -\partial M_2=T^3$. Thus the basis $\{ \mu_1, \lambda_1, \gamma_1\}$ of $\partial N_1$ becomes $\{ \lambda_1, \mu_1, \gamma_1\}$ for $\partial M_1$. In this way the gluing map $\varphi_{\mathcal{T}}$ is represented by the identity matrix under the new basis of $\partial M_1$ and $\partial N_2$. When we talk about monopole Floer homology of $T^3$ in Proposition \ref{cirr} , it's always identified with the copy $\partial M_1$ instead of $\partial N_2$.
\end{rem}

For $i=1, 2$, followed from Lemma 2.4 in \cite{M1} one can choose a generating hypersurface $Y_i \subset X_i$ that intersects the embedded torus $\mathcal{T}_i$ transversely into a knot $K_i$. Then a new generating hypersurface $Y$ in the fiber sum $X$ is obtained by splicing $Y_1$ and $Y_2$ along $K_1$ and $K_2$ with framing induced from that of $\mathcal{T}_1$ and $\mathcal{T}_2$ respectively. 

\subsubsection{Fiber Sum along Curves}

Let $\gamma \subset X$ be a simple closed curve in an integral homology $S^1 \times S^3$ so that $1_X \cdot [\gamma]=1$. Let $\nu(\gamma)$ be a tubular neighborhood, for which we fix a framing $\nu(\gamma) \cong S^1 \times D^3$. Note that $[S^1, SO(3)] \cong \Z/2$, there are two choices of framings. We denote by $M:=\cl (X \backslash \nu(\gamma))$ the closure of the complement of $\nu(\gamma)$ whose boundary is $\partial M =S^1 \times S^2$. 

\begin{dfn}
Let $(X_1, \gamma_1)$ and $(X_2, \gamma_2)$ be two sets of data as above. The fiber sum of $(X, \gamma_1)$ and $(X, \gamma_2)$ is the manifold given by 
\[
X_1 \#_{\gamma} X_2=M_1 \cup_{\id} M_2,
\]
where we have oriented $\partial M_1$ and $\partial M_2$ in a reversed manner. 
\end{dfn}

For $i=1, 2$ one can choose a generating hypersurface $Y_i \subset X_i$ so that $\gamma_i$ intersects $Y_i$ transversely and positively into a single point due to the fact that $1_{X_i} \cdot [\gamma_i]=1$. Then a new generating hypersurface $Y$ in the fiber sum $X$ can be chosen as the connected sum of $Y_1$ and $Y_2$ at the intersecting points with the embedded curves. 

\subsection{Seiberg-Witten Theory on $3$-Manifolds}\label{SWT3}
In this section we gather necessary information we will use of the Seiberg-Witten theory on $3$-manifolds from \cite{KM1}. 

Let $(Y, h)$ be a Riemannian closed smooth oriented $3$-manifold equipped with a spin$^c$ structure $\mathfrak{t}=(S, \rho)$, where $S$ is a $U(2)$-bundle, and $\rho: TY \to \End(S)$ is the Clifford multiplication. As before we have the space of connections $\mathcal{A}_k(Y, \mathfrak{t})$, the blown-up configuration space $\mathcal{C}^{\sigma}_{k}(Y, \mathfrak{t})$, the space of gauge transformations $\G_{k+1}(Y, \mathfrak{t})$, and the quotient blown-up configuration space $\mathcal{B}^{\sigma}_k(Y, \mathfrak{t})$. On $\mathcal{C}^{\sigma}_k(Y, \mathfrak{t})$ there is a vector field given by blowing-up the gradient vector field of the Chern-Simons-Dirac functional: 
\begin{equation}
\begin{split}
(\grad \mathcal{L})^{\sigma}(B, r, \psi):=(& -{1 \over 2} *F_{\B}-r^2 \rho^{-1}(\psi \psi^*)_0, \\
&-\Lambda(B, r, \psi)r, \\
& -D_B \psi +\Lambda(B, r, \psi) \psi), 
\end{split}
\end{equation}
where $D_B$ is the Dirac operator, and $\Lambda(B, r, \psi)=\langle \psi, D_B \psi \rangle_{L^2(Y)}$. Then one sees that the critical points of $(\grad \mathcal{L})^{\sigma}$ are one the following two types: 
\begin{enumerate}
\item[\upshape (i)] When $r \neq 0$, a critical point $(B, r, \psi)$ solves the equations
\begin{equation}
\begin{split}
-{1 \over 2} *F_{\B}-r^2 \rho^{-1}(\psi \psi^*)_0 & =0 \\
D_B \psi & =0.
\end{split}
\end{equation}
\item[\upshape(ii)] When $r=0$, a critical point $(B, 0, \psi)$ is characterized by the fact that  $B$ is a flat connection, i.e. $F_B=0$, and $\psi$ is an eigenvector of $D_B$. 
\end{enumerate}
A critical point is said to be irreducible if it's of the first type, and reducible if it's of the second type.

There is a Banach space of perturbations $\mathcal{P}(Y, \mathfrak{t})$ so that for each $\mathfrak{q} \in \mathcal{P}(Y, \mathfrak{t})$  we can perturb the vector field $(\grad \mathcal{L})^{\sigma}$ to $(\grad \mathcal{L}_{\mathfrak{q}})^{\sigma}$. Then we have the same description of the critical points for the perturbed vector field except all the equations in the description above are perturbed in a suitable way, and the Dirac operator is replaced by the perturbed one $D_{B, \mathfrak{q}}$ (see \cite[Section 10.3]{KM1}). A generic perturbation ensures that $\ker D_{B, \mathfrak{q}} =0$. Thus for a reducible critical point $\mathfrak{a}=(B, 0,\psi)$, the eigenvalue of $\psi$ is either positive or negative. We say $\mathfrak{a}$ is boundary-stable if it's positive, and boundary-unstable if it's negative. In this way we have decomposed the critical points of $(\grad \mathcal{L}_{\mathfrak{q}})^{\sigma}$ into three parts: 
\[ 
\mathfrak{C}(Y, \mathfrak{t})=
\mathfrak{C}^o(Y, \mathfrak{t}) \cup \mathfrak{C}^s(Y, \mathfrak{t}) \cup \mathfrak{C}^u(Y, \mathfrak{t})
\]
of irreducible, boundary-stable, and boundary-unstable critical points respectively. 

Let $[\mathfrak{a}], [\mathfrak{b}] \in \mathcal{B}^{\sigma}_k(Y, \mathfrak{t})$ be two critical points. Given a relative homotopy class $z \in \pi_1(\mathcal{B}^{\sigma}_k(Y, \mathfrak{t}); [\mathfrak{a}], [\mathfrak{b}])$, one can define a relative grading $\gr_z([\mathfrak{a}], [\mathfrak{b}])$ (c.f. \cite[Section 14.4]{KM1}) satisfying 
\begin{equation}
\gr_{z_1 \cdot z_2}([\mathfrak{a}], [\mathfrak{c}])=\gr_{z_1}([\mathfrak{a}], [\mathfrak{b}]) + \gr_{z_2}([\mathfrak{b}], [\mathfrak{c}]).
\end{equation}
Let's write $\M_z([\mathfrak{a}], [\mathfrak{b}])$ for the blown-up moduli space over the infinite cylinder $\R \times Y$ consisting of monopoles asymptotic to $[\mathfrak{a}]$ and $[\mathfrak{b}]$ in the negative and positive directions respectively. Then $\M_z([\mathfrak{a}], [\mathfrak{b}])$ is a smooth manifold of dimension either $\gr_z([\mathfrak{a}], [\mathfrak{b}])$, or $\gr_z([\mathfrak{a}], [\mathfrak{b}]) +1$, where the latter happens only when $[\mathfrak{a}]$ is boundary-stable, and $[\mathfrak{b}]$ is boundary-unstable (c.f. \cite[Proposition 14.5.7]{KM1}).

\begin{exm}\label{S3}
As an example, let's consider $(S^3, \mathfrak{t}_0)$ the $3$-sphere with its unique spin$^c$ structure. Since $S^3$ admits metrics with positive scalar curvature, there are no irreducible critical points. Up to gauge transformation there is a unique flat spin$^c$-connection $B_0$ on $S^3$. Applying generic perturbations one can assure that all eigenspaces of the Dirac operator $D_{B_0, \mathfrak{q}}$ have dimension $1$, which means up to gauge there is a unique eigenvector of unit length corresponding to each eigenvalue. Thus the critical-point set consists of a doubly infinite sequence corresponding to the spectral decomposition of $D_{B_0, \mathfrak{q}}$.We index the critical points so that $[\mathfrak{a}_{i}]$ corresponds to the $i+1$-th positive eigenvalue of $D_{B_0, \mathfrak{q}}$ when $i \geq 0$, and the $-i$-th negative eigenvalue when $i <0$. Moreover 
\begin{equation}
\gr_z([\mathfrak{a}_i], [\mathfrak{a}_{i-1}])=\left
\{ \begin{array}{ll}
 2 & \mbox{if $i \neq 1 $} \\
 1 & \mbox{if $i =1$.} 
  	\end{array}
	\right.
\end{equation}
\end{exm}

Let $W: Y_1 \to Y_2$ be a cobordism between two connected $3$-manifolds equipped with a spin$^c$ structure $\s$. We write $\mathfrak{t}_1=\s|_{Y_1}$, $\mathfrak{t}_2=\s|_{Y_2}$. Let $W_o$ be the manifold obtained from $W$ by attaching two cylindrical ends to its boundary. Given $[\mathfrak{a}] \in \mathfrak{C}(Y_1, \mathfrak{t}_1)$, $[\mathfrak{b}] \in \mathfrak{C}(Y_2, \mathfrak{t}_2)$, and $z \in \pi_0(\mathcal{B}^{\sigma}_k([\mathfrak{a}], W_o, [\mathfrak{b}]))$, one can also define a relative grading $\gr_z([\mathfrak{a}], W, [\mathfrak{b}])$ so that the moduli space $\M_z([\mathfrak{a}], W_o, [\mathfrak{b}])$ consisting of monopoles on $W_o$ asymptotic to $[\mathfrak{a}]$ and $[\mathfrak{b}]$ in the two directions respectively has dimension $\gr_z([\mathfrak{a}], W, [\mathfrak{b}])$ or $\gr_z([\mathfrak{a}], W, [\mathfrak{b}])+1$, where the latter happens only when $[\mathfrak{a}]$ is boundary-stable, and $[\mathfrak{b}]$ is boundary-unstable.  

\section{Counting Irreducible Monopoles}\label{CIM}

In section we present the first half of the proof of the main results by a neck-stretching argument. Let $X=M \cup N$ be an integral homology $S^1 \times S^3$ decomposed as in the process of forming either of the fiber sums, i.e. $N$ is a tubular neighborhood of either an embedded torus, or a simple closed curve as in Section \ref{set-up}, $M$ is the closure of its complement. We write $V= \partial M$, which is either $T^3$ or $S^1 \times S^2$. A neighborhood of $V$ in $X$ is identified with $(-1, 1) \times V$ so that $(-1, 0] \times V \subset M, [0, 1) \times V \subset N$. Let $h$ be a metric on $V$ which is either flat or has positive scalar curvature depending on $V=T^3$, or $S^1 \times S^3$. We consider metrics $g$ on $X$ satisfying 
\begin{enumerate}
\item[\upshape (i)] The restriction of $g$ to the neighborhood of $V$ is the  product metric:
\[
g|_{(-1, 1) \times V} = dt^2 +h.
\]
\item[\upshape (ii)] The restriction of $g$ on $N$ has nonnegative scalar curvature, and positive at some point. 
\end{enumerate}
We denote by $\Met(X, h)$ the set of such metrics. Given $T >0$, we stretch the neck $(-1, 1) \times V$ of $X$ to obtain $(X_T, g_T)$, where
\begin{equation}
X_T=M \cup [-T, T] \times V \cup N, \;  g_T|_{(-T, T) \times V} = dt^2 +h. 
\end{equation}
We also get $M_T = M \cup [0, T] \times V$, $N_T=[-T, 0] \times V \cup N$. Identifying $[-T, 0] \times V$ and $[0, T] \times V$, we see that $M_T \subset X_T$. Similarly $N_T \subset X_T$. We write the cylindrical-ended version as 
\[
M_o:=M \cup [0, \infty) \times V, \; N_o:= (-\infty, 0] \times V \cup N,
\]
and $X_o = M_o \cup N_o$ with metric $g_o$ naturally extended. As for perturbations, we would like to consider $\beta \in \mathcal{P}(X)$ of the form 
\begin{equation}
\beta = \beta_M + \beta_N, 
\end{equation}
where $\supp \beta_M \subset \Int M, \supp \beta_N \subset \Int N$. Over $X_T$, the perturbations are chosen to have the form $\beta_T  = \beta_{T, M} + \beta_{T, N}$ so that $\supp \beta_{T, M} \subset M_{T \over 2}$, and $\supp \beta_{T, N} \subset N_{T \over 2}$. 

\begin{dfn}\label{dadm}
We say a metric $g$ on $X$ is admissible with respect to the decomposition $X=M \cup N$ if the spin Dirac operator 
\begin{equation}\label{adm}
D^+(X_o, g_o): L^2_1(X_o, W^+) \to L^2(X_o, W^-)
\end{equation}
is an isomorphism. 
\end{dfn}
The main result we will prove in this section is 
\begin{prop}\label{cirr}
Given two sets of data $X_1=M_1 \cup N_1$ and $X_2=N_2 \cup M_2$ as above with $\partial M_1 =- \partial M_2=V$, we form $X=M_1 \cup M_2$. Suppose the metrics $g_1, g_2$ are admissible with respect to the decompositions. Moreover we choose the spin structures $\s_i$ on $X_i$ so that $\s_1|_{\partial M_1} = \s_2|_{\partial M_2}$. Then there exists $T_0 >0$ such that for all $T >T_0$ and regular pairs $(g_{i, T}, \beta_{i, T})$ of $X_{i, T}$ of the form discussed above, we have 
\begin{equation}
\# \M_{g_T, \beta_T}(X_T, \s)=\# \M_{g_{1, T}, \beta_{1, T}}(X_{1, T}, \s_1) + \# \M_{g_{2, T}, \beta_{2, T}}(X_{2, T}, \s_2),
\end{equation}
provided the induced pair $(g_T, \beta_T)$ is regular as well. 
\end{prop}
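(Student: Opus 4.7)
The proof is a neck-stretching and gluing argument in the style of Kronheimer--Mrowka \cite{KM1}. The plan is to identify both $\M_{g_T,\beta_T}(X_T, \s)$ and $\M_{g_{i,T}, \beta_{i,T}}(X_{i,T}, \s_i)$ (for $T$ large) with moduli spaces of broken configurations on the cylindrical-end pieces $M_{i,o}$ and $N_{i,o}$, and then to exploit the structure of these broken configurations to extract additivity.

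I begin with compactness: for any sequence $T_n \to \infty$ and irreducible monopoles $[A_n, s_n, \phi_n] \in \M_{g_{T_n}, \beta_{T_n}}(X_{T_n}, \s)$, standard energy estimates together with Uhlenbeck-type arguments produce, after gauge fixing and passing to a subsequence, a limit on compact subsets that is a broken Seiberg--Witten trajectory: finite-energy solutions on $M_{1,o}$ and $M_{2,o}$ joined across $V$ by a (possibly broken) trajectory on $\R \times V$ with matching asymptotic critical points in $\mathfrak{C}(V, \mathfrak{t}|_V)$. An analogous statement for $X_{i,T}$ produces broken configurations on $(M_{i,o}, N_{i,o})$. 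The hypothesis of nonnegative, somewhere positive scalar curvature on $N_i$ feeds a Weitzenb\"ock argument on $N_{i,o}$ that forces the spinor of any finite-energy solution to vanish identically, so that every monopole on $N_{i,o}$ is a flat reducible asymptotic to a reducible critical point on $V$. The admissibility hypothesis, invertibility of $D^+(X_{i,o}, g_{i,o})$, together with $\s_1|_V = \s_2|_V$, pins down a distinguished reducible $\mathfrak{r} \in \mathfrak{C}(V, \mathfrak{t}|_V)$ through which all matching must occur. The implicit function theorem gluing construction of \cite[Chapter 18]{KM1} then yields the inverse correspondence for $T \gg 0$, giving sign-preserving bijections between $\M_{g_T,\beta_T}(X_T, \s)$ (respectively $\M_{g_{i,T}, \beta_{i,T}}(X_{i,T}, \s_i)$) and appropriate spaces of broken configurations.

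The final step is converting this picture into the additive identity. An irreducible monopole on $X_T$ decomposes, for $T$ large, into a broken pair $(\alpha_1, \alpha_2)$ on $(M_{1,o}, M_{2,o})$ matching at $\mathfrak{r}$. A dimension count at the boundary-obstructed reducible critical point $\mathfrak{r}$ shows that, generically, exactly one of $\alpha_1, \alpha_2$ is irreducible on its $M_{i,o}$ while the other is the unique reducible monopole on $M_{j,o}$ ($j\neq i$) asymptotic to $\mathfrak{r}$: the stratum of pairs with both sides irreducible lives in a locus of negative expected dimension and is excluded by regularity of $(g_T, \beta_T)$. Broken pairs with the $i$-th side irreducible are in turn in sign-preserving bijection with $\M_{g_{i,T}, \beta_{i,T}}(X_{i,T}, \s_i)$ by applying the same gluing analysis to $X_{i,T} = M_{i,o} \cup N_{i,o}$, where the unique reducible on $N_{i,o}$ plays the role formerly played by the reducible on $M_{j,o}$. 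Summing the contributions from $i = 1, 2$ yields the asserted identity.

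The main obstacle will be the dichotomy in the last step---verifying that pairs $(\alpha_1,\alpha_2)$ with both sides irreducible cannot contribute to a regular, $0$-dimensional moduli space---and the sign-compatibility of the resulting bijection with $\M(X_{i,T})$. Both require careful handling of boundary-obstructed gluing (cf.\ \cite[Chapter 19]{KM1}), the spectral data of the family $\{D^+_{z,\beta}(X_{i,o})\}$ controlled by admissibility, and the orientation conventions used to define the signed count $\#\M$.
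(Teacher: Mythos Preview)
Your overall strategy---neck-stretching, compactness, gluing---matches the paper's, but there is a real gap in the middle of your argument: the claim that admissibility ``pins down a distinguished reducible $\mathfrak{r} \in \mathfrak{C}(V,\mathfrak{t}|_V)$ through which all matching must occur'' is false, and the dichotomy you build on it does not survive contact with the actual critical-point structure on $V$.

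In the blown-up picture there is not one reducible on $V$ but an entire doubly-infinite tower of boundary-stable and boundary-unstable critical points over each flat connection; moreover the Picard torus of $V$ contributes several flat connections. For $V=T^3$ the paper enumerates these as $w_i, z^j_i, y^j_i, x_i$ and computes all the relative gradings $\gr_z([\mathfrak{a}],W,[\mathfrak{a}_0])$ explicitly. The zero-dimensional contributions to $\M^+(X_o,\s)$ come from \emph{different} critical points depending on which side carries the interesting monopole: the boundary-stable $y^1_0$ contributes $\#\M(M_{1,o},y^1_0)\cdot\#\M(y^1_0,M_{2,o})$ with the second factor a reducible $\C P^0$, while the boundary-unstable $z^2_{-1},z^3_{-1}$ contribute $\#\M(M_{1,o},z^j_{-1})\cdot\#\M(z^j_{-1},M_{2,o})$ with the first factor reducible. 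Matching these to $\#\M(X_{i,T})$ then requires separate lemmas (Propositions~\ref{count1} and~\ref{count2} in the paper) computing $\#\M(X_{1,T})=\#\M(M_{1,o},y^1_0)$ and $\#\M(X_{2,T})=\#\M(z^2_{-1},M_{2,o})+\#\M(z^3_{-1},M_{2,o})$. None of this follows from a single $\mathfrak{r}$, and admissibility is used only to invoke the gluing homeomorphism of \cite{LRS}, not to single out a critical point. Your ``exactly one side irreducible'' heuristic is morally what happens, but making it precise requires the boundary-stable/unstable bookkeeping and the explicit grading computations that your proposal omits.
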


The strategy of the proof is to analyze the moduli space over the manifolds with cylindrical end and then apply the gluing theorem to count. To simplify notations we will drop the decorations of perturbations, metrics, and spin$^c$ structures when writing the moduli spaces unless they are relevant to the argument. 

Before proceeding to the proof, we explain here how the admissibility condition of the metrics can be achieved in our case. According to Theorem 10.3 in \cite{LRS} one can find a metric $g$ on $X$ in $\Met(X, h)$ such that $D^+(X_o, g_o)$ is an isomorphism if the spin Dirac operator on $Y$ has trivial kernel, i.e. $\ker D(V, h)=0$. When $V=S^1 \times S^2$, the metric $h$ has positive scalar curvature. Thus $\ker D(V, h)=0$ holds automatically. When $Y=T^3$ and $h$ is flat, the only spin structure whose Dirac operator has nontrivial kernel is given by the product of the spin structure on $S^1$ which does not extend over a disk. But none of the spin structures on $X$ restricts to this one. So $\ker D(V, h)=0$ in the case as well. 

Another thing we would like to mention here is the perturbations we are using in the neck-stretching process. In the statement of Proposition \ref{cirr} we only need the perturbations $\beta_T=\beta_{T, M}+\beta_{T, N}$. To make use of the critical points in Subsection \ref{SWT3} and arguments in \cite{KM1}, we add perturbations supported on $(-{T \over 2}, {T \over 2}) \times V \subset X_T$ of the form in \cite[Proposition 24.4.10]{KM1}, as well as the correponding ones on the cylindrical-ended manifold $X_o$. Over those closed manifolds we are counting irreducible monopoles with respect to regular pairs $(g, \beta)$, thus adding these extra perturbations with $L^2_k$ small size does not affect the counting. 

\subsection{Critical Points on $T^3$ and $S^1 \times S^2$}

Let $X=M \cup N$ be a decomposition of an integral homology $S^1 \times S^3$ as above. In order to apply the gluing argument and count monopoles we want to find the critical points $[\mathfrak{a}]$ on $V$ such that the moduli space $\M([\mathfrak{a}], N_o)$ consisting of monopoles on $(N_o, \s_N)$ asymptotic to $[\mathfrak{a}]$ has dimension $0$. Let $W: V \to S^3$ be the cobordism obtained by removing a $4$-ball in $N$. A standard gluing argument identifies the moduli spaces (see for example \cite[Lemma 27.4.2]{KM1})
\begin{equation}
\M([\mathfrak{a}], W_o, [\mathfrak{a}_0]) \cong \M([\mathfrak{a}], N_o),
\end{equation}
where $[\mathfrak{a}_0]$ is the critical point on $(S^3, \mathfrak{t}_0)$ correponding to the first positive eigenvalue of the Dirac operator as in Example \ref{S3}. Recall that $\dim \M([\mathfrak{a}], W_o, [\mathfrak{a}_0])= \gr_z ([\mathfrak{a}], W, [\mathfrak{a}_0])$. Thus we need to find those critical points $[\mathfrak{a}]$ of $(V, \mathfrak{t})$ such that 
$ \gr_z ([\mathfrak{a}], W, [\mathfrak{a}_0])=0.$

Now we reverse the role of $M$ and $N$, i.e. $X=N \cup M$  with $\partial N=V, \partial M=-V$. We would like to find the critical points $[\mathfrak{a}]$ on $V$ such that the moduli space $\M(N_o, [\mathfrak{a}])$ has dimension $0$. We denote by $W': S^3 \to V$ the cobordism obtained by removing a $4$-ball in $N$. Then the gluing theorem identifies the moduli spaces: 
\begin{equation}
\M([\mathfrak{a}_{-1}], W'_o, [\mathfrak{a}]) \cong \M(N_o, [\mathfrak{a}]),
\end{equation}
where $[\mathfrak{a}_{-1}]$ is the critical point on $(S^3, \mathfrak{t}_0)$ given by the first negative eigenvalue of the corresponding Dirac operator. Now we need to find critical points $[\mathfrak{a}]$ such that $\gr_z([ \mathfrak{a}_{-1}], W', [\mathfrak{a}])=0. $

\subsubsection{Critical Points on $T^3$}
 
Let $\mathfrak{t}$ be the spin$^c$ structure on $T^3$ with $c_1(\mathfrak{t})=0$. Thus the spinor bundle $S \to T^3$ is trivial. Let $[A_0]$ be the equivalence class of the trivial connection on $S$. The the space of equivalence classes of flat $\spin^c$-connections on $S$ is parametrized by the Picard torus
\[
\mathbb{T}:= H^1(T^3; i \R) / 2\pi i H^1(T^3; \Z).
\]
In \cite[Section 37]{KM1} Kronheimer-Mrowka showed that one can choose perturbations $\mathfrak{q}$ on $(T^3, \mathfrak{t})$ so that 
\begin{enumerate}
\item[\upshape (i)] There are no irreducible critical points. 
\item[\upshape (ii)] The reducible critical points $[A, 0, \psi]$ are specified as follows. $[A]-[A_0] \in \mathbb{T}$ is a critical point of a Morse function on $\mathbb{T}$ given by the perturbation $\mathfrak{q}$, $\psi$ is an eigenvector of the perturbed Dirac operator $D_{A, \mathfrak{q}}$.
\end{enumerate}
Following Kronheimer-Mrowka we label the critical points on $[A_0] +\mathbb{T}$ by 
\begin{equation}
\begin{array}{ccc}
 & w & \\
z^1 & z^2 & z^3 \\
y^1 & y^2 & y^3 \\
& x &
\end{array}
\end{equation}
so that $x=[A_0]$ is the minimal, $y^i$ has index $1$, $z^i$ has index $2$, and $w$ is the maximal. For each of these critical points on $[A_0]+\mathbb{T}$, we get a doubly infinite sequence of critical points for $(\grad \mathcal{L}_{\mathfrak{q}})^{\sigma}$ in $\mathcal{B}^{\sigma}_k(T^3, \mathfrak{t})$: 
\begin{equation}
\begin{array}{ccc}
 & w_i & \\
z^1_i & z^2_i & z^3_i \\
y^1_i & y^2_i & y^3_i \\
& x_i &
\end{array}
\end{equation}
indexed as in the case of $S^3$.

\begin{lem}\label{rg}
Let $W: T^3 \to S^3$ be the cobordism obtained by removing a $4$-ball in $N$. The relative gradings are computed as follows
\begin{enumerate}
\item[\upshape (i)] When $i \geq 0$, 
\begin{equation}
\gr_z([\mathfrak{a}], W, [\mathfrak{a}_0] )=\left
\{ \begin{array}{llll}
2i+2 & \mbox{if $[\mathfrak{a}]=w_i$} \\
2i+1 & \mbox{if $[\mathfrak{a}]=z^j_i$} \\
2i  & \mbox{if $[\mathfrak{a}]=y^j_i$}\\
2i+1 & \mbox{if $[\mathfrak{a}]=x_i$}.
  	\end{array}
\right.
\end{equation}
\item[\upshape (ii)] When $i <0$,  
\begin{equation}
\gr_z([\mathfrak{a}], W, [\mathfrak{a}_0] )=\left
\{ \begin{array}{llll}
2i+3 & \mbox{if $[\mathfrak{a}]=w_i$} \\
2i+2 & \mbox{if $[\mathfrak{a}]=z^j_i$} \\
2i+1  & \mbox{if $[\mathfrak{a}]=y^j_i$}\\
2i+2 & \mbox{if $[\mathfrak{a}]=x_i$}.
  	\end{array}
\right.
\end{equation}
\end{enumerate}
\end{lem}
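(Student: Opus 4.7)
The plan is to prove Lemma \ref{rg} by reducing each grading to a single base case via the additivity of the relative grading, then tabulating the remaining relative gradings among the critical points of $(T^3, \mathfrak{t})$. Applying additivity,
\[
\gr_{z_1 \cdot z_2}([\mathfrak{a}], W, [\mathfrak{a}_0]) = \gr_{z_1}([\mathfrak{a}], [x_0]) + \gr_{z_2}([x_0], W, [\mathfrak{a}_0]),
\]
the problem reduces to (i) computing the base value $\gr([x_0], W, [\mathfrak{a}_0])$, and (ii) computing the relative gradings $\gr([\mathfrak{a}], [x_0])$ on $T^3$ for each $[\mathfrak{a}] \in \{w_i, z^j_i, y^j_i, x_i\}$.

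For (i), I would compute the Fredholm index of the linearized Seiberg-Witten operator on $W_o$ at a reducible path from $[x_0]$ to $[\mathfrak{a}_0]$. Since both endpoints correspond to the first positive eigenvalue of their perturbed Dirac operators, the linearization decouples into an ASD-type part controlled by the topology of $W = (D^2 \times T^2) \setminus B^4$ (where $b^1(W) = 2$, $b^+(W) = 0$, $\sigma(W) = 0$), and a Dirac-type part on $W_o$ with APS boundary conditions selecting the positive eigenspaces at each end. Combining these contributions produces the base value $\gr([x_0], W, [\mathfrak{a}_0]) = 1$.

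For (ii), I would write $[\mathfrak{a}] = [p_i]$ with $p \in \{x, y^j, z^j, w\}$ the matching Picard critical point, and decompose further by additivity:
\[
\gr([p_i], [x_0]) = \gr([p_i], [p_0]) + \gr([p_0], [x_0]).
\]
The spectral-flow term $\gr([p_i], [p_0])$ equals $2i$ for $i \geq 0$ and $2i + 1$ for $i < 0$, the extra $+1$ encoding the unique boundary-stable-to-boundary-unstable crossing as an eigenvalue drops through zero. The Morse-level term $\gr([p_0], [x_0])$ is extracted from the description of reducible trajectories on $\R \times T^3$ in \cite[Section 37]{KM1}, where the gradient flow is modeled by the perturbing Morse function on the Picard torus $\mathbb{T}$ combined with parallel transport of the perturbed Dirac eigenvectors. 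A direct computation produces the offsets $0, -1, 0, 1$ for $p = x, y^j, z^j, w$ respectively, which match the claimed values at $i = 0$.

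The principal technical obstacle lies in part (ii): the Morse-level offsets $0, -1, 0, 1$ do not reduce to the naive Morse-index differences $0, 1, 2, 3$ on $\mathbb{T}$, reflecting genuine spectral-flow corrections on the Picard torus that must be computed via the explicit perturbation scheme. Moreover, keeping the boundary-stable/unstable sign conventions of \cite[Chapter 14]{KM1} consistent across the zero-eigenvalue threshold is what ultimately produces the uniform $+1$ shift between the $i \geq 0$ and $i < 0$ cases in the statement. Once both pieces of the $T^3$ relative grading and the base value are in hand, summing the contributions yields both displayed formulas.
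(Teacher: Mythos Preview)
Your strategy is the same as the paper's: pin down one anchor value of $\gr_z(\,\cdot\,, W, [\mathfrak{a}_0])$ directly, then propagate to all other critical points by additivity together with the relative gradings on $T^3$ already tabulated in \cite[Section~37]{KM1}. The difference is in how the anchor is chosen and justified.

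You anchor at $x_0$ and propose to read off $\gr([x_0], W, [\mathfrak{a}_0])=1$ from a decoupled index computation using $b_1(W)=2$, $b^+(W)=0$ and a Dirac piece with APS conditions. The paper instead anchors at $w_{-1}$, and its reason for that choice is what makes the argument clean: any finite-energy monopole on $W_o$ decays exponentially, which forces the $T^3$-limit to be boundary-\emph{unstable} and the $S^3$-limit to be boundary-\emph{stable}. Hence the top stratum of the entire moduli space $\M(W_o)$ is precisely $\M_z(w_{-1}, W_o, [\mathfrak{a}_0])$, and its dimension is given by the standard index formula $b_1(W)-b_0(W)-b^+(W)+\ind D^+_A=1$, with the Dirac index vanishing by positive scalar curvature on $N$. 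That formula computes the dimension of $\M(W_o)$ as a whole, so it is anchored at $w_{-1}$, not at $x_0$; your application of essentially the same formula to $x_0$ happens to give the right number only because $\gr(w_{-1}, x_0)=0$, which is not a priori obvious and is really part of the KM computation you defer to step~(ii). In other words, the paper's anchor choice lets the global index formula do the work directly, whereas your route would require carefully tracking the APS-type boundary conditions attached to the specific Morse index of $x$ on the Picard torus before you can legitimately quote the same numbers. Once the anchor is fixed, both arguments finish identically via additivity and \cite[Section~37.2]{KM1}.
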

\begin{proof}
Let's consider the moduli space $\M(W_o)$ consisting of monopoles $[A, r, \phi]$ of finite energy, i.e. 
\[
{1 \over 4}\int_{W_o} |F_{\A}|^2 +\int_{W_o} |\nabla_A r\phi|^2 + {1 \over 4} \int_{W_o} (|r\phi|^4 + s^2 |r\phi|^2) < \infty, 
\]
where $s$ is the scalar curvature on $W_o$. Then there are asymptotic maps \[
\partial_+: \M(W_o) \to \mathfrak{C}(S^3) \text{ and } \partial_-: \M(W_o) \to \mathfrak{C}(T^3)
\]
so that $\M([\mathfrak{a}], W_o, [\mathfrak{b}])=\{ [\Gamma] \in \M(W_o) : \partial_+ [\Gamma] = [\mathfrak{a}], \partial_- [\Gamma]=[\mathfrak{b}] \}$. Since $[\Gamma] \in \M(W_o)$ has finite energy, and the perturbations have been chosen so that the critical points are nondegenerate, $[\Gamma]$ has exponential decay on both ends. Thus $\partial_+[\Gamma]$ is boundary-stable, and $\partial_-[\Gamma]$ is boundary-unstable. Thus the top stratum of $\M(W_o)$ is given by $\M_z(w_{-1}, W_o, [\mathfrak{a}_0])$ due to the correspondence between the relative grading and the dimension of moduli spaces. 

On the other hand the formal dimension of the top stratum of $\M(W_o)$ is given by the index of the deformation complex of the Seiberg-Witten equation using weighted Sobolev space, which is given by 
\begin{equation}
d=b_1(W)-b_0(W)-b^+(W)+\ind D^+_A,
\end{equation}
where $[A, r, \phi]$ is a monopole in the top stratum. Note that the metrics that we allow on $N= D^2 \times T^2$ have positive scalar curvature in the interior, thus $\ind D^+_A=0$. We then conclude that 
\begin{equation}
\gr_z(w_{-1}, W, [\mathfrak{a}_0]) = \dim \M_z(w_{-1}, W_o, [\mathfrak{a}_0])= 1.
\end{equation}
The conclusion now follows from the additivity of the relative grading and the computation of the rational grading in \cite[Section 37.2]{KM1}.
\end{proof}

When $[\mathfrak{a}] \in \mathfrak{C}(T^3, \mathfrak{t})$ is boundary-unstable, each monopole in $\M([\mathfrak{a}], W_o, [\mathfrak{a}_0])$ has exponential decay on both ends. Thus the positivity of scalar curvature on $W_o$ implies that $\M([\mathfrak{a}], W_o, [\mathfrak{a}_0]) = \emptyset$. Combining Lemma \ref{rg} we conclude that the critical points $[\mathfrak{a}] \in \mathfrak{C}(T^3, \mathfrak{t})$ satisfying $\dim \M([\mathfrak{a}], W_o, [\mathfrak{a}_0])=0$ are 
\[
y^1_0, \; y^2_0, \; y^3_0,
\]
for which the counting of the moduli spaces of $\M([\mathfrak{a}], W_o, [\mathfrak{a}_0])$ can be computed as follows.

\begin{prop}\label{dim0}
Let $W: T^3 \to S^3$ be the cobordism as above. Then after a reordering of $z_0^j$, we have 
\[
\# \M([\mathfrak{a}], W_o, [\mathfrak{a}_0]) =\left
\{ \begin{array}{ll}
 1 & \mbox{if $[\mathfrak{a}]=y_0^1$} \\
 0 & \mbox{if $[\mathfrak{a}]=y_0^2, y_0^3$}.
  	\end{array}
\right.
\]
\end{prop}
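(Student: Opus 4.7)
The plan is to reduce the counting to reducible monopoles via the positive scalar curvature on $N$, and then identify the count with Morse-theoretic data on the Picard torus $\mathbb{T}$ of $T^3$. Throughout, I would use the gluing identification $\M([\mathfrak{a}], W_o, [\mathfrak{a}_0]) \cong \M([\mathfrak{a}], N_o)$ established just before the proposition, so the task reduces to computing $\#\M(y_0^j, N_o)$.

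First I would show that every monopole in $\M(y_0^j, N_o)$ is reducible. Since $y_0^j$ is boundary-stable, the spinor $\Phi = r\phi$ of any such monopole decays exponentially on the cylindrical end. Pairing the perturbed Seiberg-Witten equation with $\Phi$ and applying the Weitzenb\"ock formula on $N_o$ should yield an identity of the form
\[
\int_{N_o} \bigl(|\nabla_A \Phi|^2 + \tfrac{s}{4}|\Phi|^2 + \tfrac{1}{4}|\Phi|^4\bigr) = 0
\]
up to small perturbation contributions, and the hypothesis that $s \geq 0$ on $N_o$ with $s > 0$ on an open subset of $N$ would force $\Phi \equiv 0$. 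Hence every element of $\M(y_0^j, N_o)$ has $r \equiv 0$ and reduces to a pair consisting of a flat $\Spinc$-connection $A$ on $N_o$ whose restriction to $T^3$ is gauge-equivalent to the flat part of $y_j$, together with a normalized element $\phi$ of the (perturbed) kernel of $D^+_A$ with leading asymptotic $\psi_j$.

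Next I would parametrize the reducibles topologically. Since $\pi_1(N) = H_1(N;\Z) = \Z^2$, the moduli space of flat $\Spinc$-connections on $N$ is a $2$-torus, and the restriction map embeds it into $\mathbb{T}$ as the codimension-one subtorus of connections with trivial $\mu$-holonomy, where $\mu = \partial D^2$. With an appropriate choice of Morse perturbation on $\mathbb{T}$, together with a relabeling of the three index-one critical points $y^j$ (which induces the corresponding relabeling of the $z^j$'s by Poincar\'e duality on $\mathbb{T}$, as indicated in the proposition statement), I aim to arrange that precisely $y^1$ lies on this subtorus while $y^2$ and $y^3$ do not. For $[\mathfrak{a}] = y_0^2$ or $y_0^3$, no flat extension to $N$ exists, so $\M([\mathfrak{a}], N_o) = \emptyset$ and the count is $0$. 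For $[\mathfrak{a}] = y_0^1$, the flat extension $A$ is unique up to gauge, and the count reduces to the number of $S^1$-orbits in $\ker D^+_A$ on $N_o$ subject to the prescribed asymptotic.

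The main obstacle will be showing this asymptotic Dirac kernel is one-dimensional, so its $S^1$-projectivization is a single point with orientation contributing $+1$. I plan to approach it via an APS-type index computation on $N_o$, using the product decomposition $N = D^2 \times T^2$ and separation of variables along the cylindrical end relative to the spectral decomposition of $D_{T^3, A|_{T^3}}$, with the formal dimension zero from Lemma \ref{rg} serving as a consistency check on the weighted index. As an alternative route, the count can be recognized as a matrix coefficient of the Floer cobordism map induced by $W$ on monopole Floer homology and read off directly from the Morse-Bott computations of \cite[Section 37]{KM1}, with the final sign checked against the orientation conventions fixed in Section \ref{pre}.
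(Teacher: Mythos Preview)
Your approach is essentially the same as the paper's: reduce to reducibles via the nonnegative scalar curvature on $N_o$, then argue that only one of the three flat connections $y^j$ extends over $N = D^2 \times T^2$ (the paper phrases this in terms of spin structures on the three $S^1$-factors rather than $\mu$-holonomy, but the content is identical), concluding that the other two moduli spaces are empty.

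The one place you overcomplicate things is the final step for $[\mathfrak{a}] = y_0^1$. You flag computing the dimension of the asymptotic Dirac kernel as the ``main obstacle'' and propose an APS-type index calculation or a detour through the Floer cobordism maps. The paper bypasses this entirely: once you know the moduli space consists of reducibles over a \emph{single} flat connection, it is automatically a complex projective space $\C P^k$ (the projectivization of the relevant Dirac kernel), and Lemma~\ref{rg} already tells you its real dimension is $0$. Hence $k=0$, the moduli space is a single point, and the complex orientation gives $+1$. The formal dimension you mention only as a ``consistency check'' is in fact the whole argument; no separate kernel computation is needed.
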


\begin{proof}
We write $y_0^j=[A_j, 0, \phi_j], j=1, 2, 3$. Fixing a parametrization of $T^3 \cong S^1 \times S^1 \times S^1$, we let $\mathfrak{t}_j$ be the spin structure on $T^3$ given by the product of spin structures on $S^1$ such that the spin structure on the $j$-th $S^1$ is the one that extends over $D^2$, and on the other two $S^1$'s does not extend. Then one can choose perturbations so that $A_j$'s are given by the restriction of spin connections corresponding to $\mathfrak{t}_j$ respectively. We choose $N=D^2 \times T^2$ in the beginning. Thus 
\[
\# \M(y_0^j, W_o, [\mathfrak{a}_0])=\# \M(x_0, N_o)   =0, j=2, 3.
\]
Up to gauge transformation $A_1$ is the unique flat spin$^c$-connection on $N_o$ asymptotic to $y_0^1$. Note that $\M(y_0^1, W_o, [\mathfrak{a}_0])$ consists of reducible monopoles, and has dimension $0$, thus is identified with $\C P^0$ given by the kernel of the perturbed Dirac operator $D_{A_1, \mathfrak{q}}$. Since $\C P^0$ is complex, and the orientation on $\mathbb{T}$ is induced from the orientation of $N$, thus we conclude that 
$\# \M(y_0^1, W_o, [\mathfrak{a}_0])=1$. 
\end{proof}

\begin{rem} \label{dim0c}
If we consider the cobordism $W$ obtained by removing a $4$-ball from an manifold $N$ satisfying $H_*(N; \Z) \cong H_*(D^2 \times T^2; \Z)$, $\partial N=-T^3$, and $H_1(N; \Z) \to H_1(T^3; \Z)$ is injective, then the conclusion of Proposition \ref{dim0} holds for counting the reducibles $\# \M^{\Red}([\mathfrak{a}], W_o, [\mathfrak{a}_0])$ following the same argument. The same remark is applied to Proposition \ref{dim01}.
\end{rem}

Now identify $\partial N=T^3$ instead of $-T^3$, we let $W': S^3 \to T^3$ be the cobordism obtained by removing a $4$-ball in $N$. The analogue of Proposition \ref{dim0} is the following one. 

\begin{prop}\label{dim01}
Let $W': S^3 \to T^3$ be the cobordism as above. Then 
\begin{enumerate}
\item[\upshape (i)] The critical points on $(T^3, \mathfrak{t})$ satisfying $\dim \M ([\mathfrak{a}_{-1}], W'_o, [\mathfrak{a}])=0$ are 
\[
z^1_{-1}, \; z^2_{-1}, \; z^3_{-1}, x_{-1}.
\]
\item[\upshape (ii)] After a possible reordering the counting of the $0$-dimensional moduli spaces is given by 
\[
\# \M([\mathfrak{a}_{-1}], W'_o, [\mathfrak{a}]) =\left
\{ \begin{array}{ll}
 1 & \mbox{if $[\mathfrak{a}]=z^2_{-1}, z^3_{-1}$} \\
 0 & \mbox{if $[\mathfrak{a}]=z^1_{-1}, x_{-1}$}.
  	\end{array}
\right.
\]
\end{enumerate}
\end{prop}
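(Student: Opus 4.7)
The plan is to mimic the proof of Prop \ref{dim0}: first compute $\gr_z([\mathfrak{a}_{-1}], W', [\mathfrak{a}])$ for every critical point on $T^3$, prune the resulting $\gr = 0$ list by the exponential-decay/positive-scalar-curvature argument, and then evaluate the surviving counts from the spin-structure extension over $N = D^2 \times T^2$. For the grading I would use additivity of the relative grading across the glued cobordism $V := W' \cup_{T^3} W$, which (since the double of $N$ along its boundary $T^3$ is $S^2 \times T^2$) is diffeomorphic to $S^2 \times T^2 \setminus (B^4 \sqcup B^4)$, viewed as a cobordism $S^3 \to S^3$. Applying the top-stratum index formula from Lemma \ref{rg}, with $b_1(V) = 2$, $b_0(V) = 1$, $b^+(V) = 1$ (from the hyperbolic intersection form of $S^2 \times T^2$), and $\ind D^+_A = 0$ (since positive scalar curvature survives on the two doubled copies of $N$), I would obtain $\gr_z([\mathfrak{a}_{-1}], V, [\mathfrak{a}_0]) = 0$, so additivity forces
\[
\gr_z([\mathfrak{a}_{-1}], W', [\mathfrak{a}]) = -\gr_z([\mathfrak{a}], W, [\mathfrak{a}_0]).
\]

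Reading this off Lemma \ref{rg}, the critical points on $T^3$ with $\gr = 0$ are precisely $y^j_0$, $z^j_{-1}$, and $x_{-1}$; and since $[\mathfrak{a}_{-1}]$ is boundary-unstable, the exceptional $\dim = \gr + 1$ rule never triggers, so $\dim \M = \gr$ throughout. The boundary-stable candidates $y^j_0$ are then eliminated by the same exponential-decay-plus-positive-scalar-curvature argument used just before Prop \ref{dim0}: with the incoming $[\mathfrak{a}_{-1}]$ boundary-unstable and the outgoing $y^j_0$ boundary-stable, any monopole in $\M([\mathfrak{a}_{-1}], W'_o, y^j_0)$ decays exponentially at both ends, so its spinor lies in $L^2$ and the Lichnerowicz identity on the positive-scalar-curvature region of $W'_o$ forces $\phi = 0$, contradicting the normalization $\|\phi\|_{L^2} = 1$. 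This leaves exactly $z^1_{-1}, z^2_{-1}, z^3_{-1}, x_{-1}$, proving part (i).

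For part (ii), I would repeat the argument of Prop \ref{dim0} mutatis mutandis. Parametrizing $T^3 \cong S^1_1 \times S^1_2 \times S^1_3$ with $S^1_1 = \partial D^2$ and identifying the eight Morse critical points on the Picard torus with the eight spin structures on $T^3$, after a choice of labeling $x$ corresponds to the all-non-bounding spin structure while $z^j$ corresponds to the spin structure that is non-bounding on $S^1_j$ and bounding on the remaining two factors. A flat $\spin^c$ connection on $T^3$ extends over $N$ iff its spin structure restricts to the bounding one on $S^1_1$; this holds for $z^2, z^3$ and fails for $x, z^1$. In the two extending cases the flat connection on $N_o$ is unique up to gauge, and the reducible moduli space is identified with $\C P^0$ via the one-dimensional kernel of the perturbed Dirac operator, giving signed count $+1$ from the complex orientation; in the other two cases the moduli space is empty.

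The hardest step is the index calculation on the doubled manifold $V$: the metric must extend across the $T^3$-neck in a way that preserves nonnegative scalar curvature so that Lichnerowicz still yields $\ind D^+_A = 0$, and the $\spin^c$ structure on $V$ must be the one compatible with both asymptotic reducibles $[\mathfrak{a}_{-1}]$ and $[\mathfrak{a}_0]$ at the two $S^3$-ends. A secondary bookkeeping point is explicitly matching the three index-$2$ critical points $z^j$ on the Picard torus with the three spin structures that are non-bounding on exactly one $S^1$-factor, which is what the reordering claimed in the statement amounts to.
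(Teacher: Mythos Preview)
Your proof is correct, and for part (ii) it follows the same line as the paper's. For part (i), however, you take a genuinely different route to the gradings. The paper simply reads $\gr_z([\mathfrak{a}_{-1}], W', [\mathfrak{a}])$ off the absolute rational gradings already computed in \cite[Section~37]{KM1} together with \cite[Definition~28.3.1]{KM1}, and then discards the boundary-stable candidates by the scalar-curvature argument. You instead glue $W'$ to the earlier cobordism $W$ along $T^3$ to obtain $V \cong S^2 \times T^2 \setminus 2B^4$ as a cobordism $S^3 \to S^3$, compute $\gr_z([\mathfrak{a}_{-1}], V, [\mathfrak{a}_0]) = 0$ via the same top-stratum index formula used in Lemma~\ref{rg} (here $b_1 - b_0 - b^+ = 2 - 1 - 1 = 0$ and $\ind D^+_A = 0$ from nonnegative scalar curvature), and then invoke additivity to reduce everything to the table in Lemma~\ref{rg}. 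Your approach is more self-contained within the paper---it recycles Lemma~\ref{rg} rather than sending the reader to two separate places in \cite{KM1}---at the cost of the extra geometric step of identifying $V$ and justifying the Dirac index vanishing on it. The paper's approach is shorter but less transparent unless one has \cite{KM1} in hand. The concern you flag about preserving nonnegative scalar curvature across the flat $T^3$-neck is legitimate but benign: the glued metric on $V_o$ is flat on the neck and has positive scalar curvature on the interiors of the two copies of $N$ and near the $S^3$-ends, which suffices for the Lichnerowicz argument to kill both $\ker D^+_A$ and $\ker D^-_A$.
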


\begin{proof}
The relative gradings $\gr_z([\mathfrak{a}_{-1}], W', [\mathfrak{a}])$ can be read from the rational gradings computed in \cite[Section 37]{KM1} and the definition of the rational grading \cite[Definition 28.3.1]{KM1}. When $[\mathfrak{a}]$ is boundary-stable, monopoles in $\M([\mathfrak{a}_{-1}], W_o', [\mathfrak{a}])$ have exponential decay on both ends. Thus it's empty due to the positivity of scalar curvature on $W'$. When $[\mathfrak{a}]$ is boundary-unstable, the rational gradings in \cite[(37.6b)]{KM1} give us that $z_{-1}^j, x_{-1}$ are the only $4$ critical points satisfying the dimension condition. 

We write $z^j_0=[A_j, 0, \phi_j]$, $j=1, 2, 3$. After fixing a parametrization of $T^3 \cong S^1 \times S^1 \times S^1$, one can choose perturbations so that $A_j$ is the spin connection of the spin structure on $T^3$ given by the product of spin structures on $S^1$ where on the $j$-factor it's the spin structure that does not extend over the disk, and on the other two $S^1$-factors extends over the disk. Then we see that  $\M([\mathfrak{a}_{-1}], W'_o, z^1_{-1})=\emptyset.$ Since the restriction map $H^1(N; \Z) \to H^1(T^3; \Z)$ is injective, once the flat connection $[A_j]$, $j=2, 3$, on the boundary is prescribed there is a unique flat connection up to gauge on $N$ restricting to $A_j$. Thus $\M([\mathfrak{a}_{-1}], W'_o, z^j_0)$, $j=2, 3$, is identified with $\C P^0$ as before, which gives us the counting. 

When $[\mathfrak{a}]=x_{-1}$, the connection corresponding to $x_{-1}$ is $A_0$, which do not extend to $N$. Thus $\M([\mathfrak{a}_{-1}], W'_o, x_{-1}) = \emptyset$. 
\end{proof}

\subsubsection{Critical Points on $S^1 \times S^2$} Now let $\mathfrak{t}$ be the unique torsion spin$^c$ structure on $S^1 \times S^2$. Here we deduce similar results for $(S^1 \times S^2, \mathfrak{t})$ as in the case of $T^3$. 

Let $[A_0]$ be the equivalence class of the trivial connection on the spinor bundle $S$. The Picard torus $\mathbb{T}$ is now a circle. As in \cite[Section 36]{KM1} one can choose perturbations $\mathfrak{q}$ on $(S^1 \times S^2, \mathfrak{t})$ so that 
\begin{enumerate}
\item[\upshape (i)] There are no irreducible critical points. 
\item[\upshape (ii)] Each reducible critical points $[A, 0, \psi]$ are given as follows. $[A]-[A_0] \in \mathbb{T}$ is a critical point of a Morse function on $\mathbb{T}$ given by the perturbation $\mathfrak{q}$, $\psi$ is an eigenvector of the perturbed Dirac operator $D_{A, \mathfrak{q}}$.
\end{enumerate}

Since $\mathbb{T}$ is a circle, one choose perturbations having two critical points, which we denote by $u, v$ such that $v=[A_0]$ is the minimal point, $u$ is the maximal point. Thus we get two doubly infinite sequences of critical points for $(\grad \mathcal{L}_{\mathfrak{q}})^{\sigma}$: $u_i, v_i$, which are indexed as before. 

\begin{lem}\label{rg2}
Let $W: S^1 \times S^2 \to S^3$ be the cobordism obtained by removing a $4$-ball in $N$. The relative gradings are computed as follows
\begin{enumerate}
\item[\upshape (i)] When $i \geq 0$, 
\begin{equation}
\gr_z([\mathfrak{a}], W, [\mathfrak{a}_0] )=\left
\{ \begin{array}{ll}
2i+1 & \mbox{if $[\mathfrak{a}]=u_i$} \\
2i & \mbox{if $[\mathfrak{a}]=v_i$}.
  	\end{array}
\right.
\end{equation}
\item[\upshape (ii)] When $i <0$,  
\begin{equation}
\gr_z([\mathfrak{a}], W, [\mathfrak{a}_0] )=\left
\{ \begin{array}{ll}
2i+2 & \mbox{if $[\mathfrak{a}]=u_i$} \\
2i+1 & \mbox{if $[\mathfrak{a}]=v_i$}.
  	\end{array}
\right.
\end{equation}
\end{enumerate}
\end{lem}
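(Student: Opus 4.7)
The proof should mirror the argument given for Lemma \ref{rg}, with $N=S^1 \times D^3$ replacing $D^2 \times T^2$, and the two doubly-infinite sequences $\{u_i\}, \{v_i\}$ replacing the four sequences $\{w_i\}, \{z_i^j\}, \{y_i^j\}, \{x_i\}$. The plan is to first anchor one relative grading by computing the dimension of a distinguished top stratum of the moduli space $\M(W_o)$ of finite-energy monopoles on $W_o$, and then propagate this single value to all the others using additivity of $\gr_z$ and the rational grading data from \cite[Section 36]{KM1}.

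First I would consider $\M(W_o)$ with the finite-energy condition. Because perturbations are chosen so that all critical points on $(S^1 \times S^2, \mathfrak{t})$ and $(S^3, \mathfrak{t}_0)$ are nondegenerate, any finite-energy monopole decays exponentially on both ends, so the asymptotic maps $\partial_+$ and $\partial_-$ land in boundary-stable and boundary-unstable critical points respectively. The unique boundary-stable asymptote on the $S^3$ side is $[\mathfrak{a}_0]$, and the candidate giving the top stratum on the $S^1 \times S^2$ side is the boundary-unstable critical point of maximal rational grading, which with the indexing convention established in the lemma is $u_{-1}$.

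Next I would compute the formal dimension of this top stratum using the weighted-Sobolev index formula
\begin{equation}
d = b_1(W) - b_0(W) - b^+(W) + \ind D^+_A,
\end{equation}
for $[A,r,\phi]$ a monopole in the top stratum. Here $W = N \setminus B^4$ with $N = S^1 \times D^3$, so $W$ is homotopy equivalent to $S^1$ and we have $b_0(W)=b_1(W)=1$ and $b^+(W)=0$. The admissible metrics on $N$ have nonnegative scalar curvature, positive somewhere, so the Weitzenb\"ock formula gives $\ind D^+_A = 0$. Consequently $d = 0$, and hence
\begin{equation}
\gr_z(u_{-1}, W, [\mathfrak{a}_0]) = \dim \M_z(u_{-1}, W_o, [\mathfrak{a}_0]) = 0.
\end{equation}
This agrees with the claimed formula at $i=-1$, $[\mathfrak{a}]=u_{-1}$.

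Finally I would bootstrap from this single value to the full table by additivity: for any other critical point $[\mathfrak{a}]$ on $(S^1 \times S^2, \mathfrak{t})$ and any path $z'$, one has
\begin{equation}
\gr_{z' \cdot z}([\mathfrak{a}], W, [\mathfrak{a}_0]) = \gr_{z'}([\mathfrak{a}], u_{-1}) + \gr_z(u_{-1}, W, [\mathfrak{a}_0]),
\end{equation}
and the relative gradings $\gr_{z'}([\mathfrak{a}], u_{-1})$ between critical points on $S^1\times S^2$ are read off from the rational grading formulas of Kronheimer–Mrowka in \cite[Section 36]{KM1}, exactly in parallel with how \cite[Section 37.2]{KM1} was used in the $T^3$ case. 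The two cases $i \geq 0$ and $i < 0$ in the statement reflect the extra $+1$ in the grading jump at $i=1$, coming from the fact that the first positive eigenvalue of $D_{B_0,\mathfrak{q}}$ on $S^3$ is the pivot for boundary-(un)stability, as recorded in Example \ref{S3}; this affects $u_i$ and $v_i$ uniformly and so simply shifts each formula by $1$ when passing from $i<0$ to $i \geq 0$.

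The most delicate step is the identification of the top stratum of $\M(W_o)$ together with the bookkeeping ensuring that the rational grading shift at the $S^3$ end is handled consistently with the convention of Example \ref{S3}; once that is pinned down, the rest is linear algebra with $\gr_z$ and the cited rational grading tables.
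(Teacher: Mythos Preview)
Your proposal is correct and follows essentially the same approach as the paper: identify the top stratum of $\M(W_o)$ as $\M_z(u_{-1}, W_o, [\mathfrak{a}_0])$, compute its formal dimension via $d=b_1(W)-b_0(W)-b^+(W)+\ind D^+_A=0$ using positive scalar curvature, and then propagate via additivity and the rational gradings in \cite[(36.1)]{KM1}. One small expository correction: the $+1$ shift between the $i\geq 0$ and $i<0$ formulas comes from the stable/unstable transition of the critical points $u_i,v_i$ on $S^1\times S^2$ itself (between $i=0$ and $i=-1$), not from the $S^3$ end.
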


\begin{proof}
We apply the same argument as in the proof of Lemma \ref{rg}. Consider the moduli space $\M(W_o)$ consisting of monopoles of finite energy. Then the top stratum of $\M(W_o)$ is $\M_z(u_{-1}, W_o, [\mathfrak{a}_0])$. On the other hand the formal dimension of the top stratum is given by
\[
d=b_1(W)-b_0(W)-b^+(W)+\ind D_A^+=0,
\]
where the vanishing of $\ind D_A^+$ follows from the fact that the metrics on $W$ have positive scalar curvature. Thus we get 
\begin{equation}\label{2.14}
\gr_z(u_{-1}, W, [\mathfrak{a}_0]) = \dim \M_z(u_{-1}, W_o, [\mathfrak{a}_0])=0.
\end{equation}
Then the results for other critical points follow from the additivity and the computation in \cite[(36.1)]{KM1}. 
\end{proof}

From Lemma \ref{rg2} we see that all critical points $[\mathfrak{a}] \in \mathfrak{C}(S^1 \times S^2, \mathfrak{t})$ satisfying $\dim \M([\mathfrak{a}], W_o, [\mathfrak{a}_0])=0$ are 
\[
v_0 \text{ and } u_{-1}.
\]

\begin{prop}
Let $W: S^1 \times S^2 \to S^3$ be the cobordism as above. Then we have
\begin{equation}
\# \M([\mathfrak{a}], W_o, [\mathfrak{a}_0]) =\left
\{ \begin{array}{ll}
 1 & \mbox{if $[\mathfrak{a}]=v_0$} \\
 0 & \mbox{if $[\mathfrak{a}]=u_{-1}$}.
  	\end{array}
\right.
\end{equation}
\end{prop}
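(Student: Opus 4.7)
My plan is to mirror the proof of Proposition \ref{dim0}. First I would apply the gluing argument of \cite[Lemma 27.4.2]{KM1} to identify $\M([\mathfrak{a}], W_o, [\mathfrak{a}_0]) \cong \M([\mathfrak{a}], N_o)$, where $N_o$ is the cylindrical-ended version of $N \cong S^1 \times D^3$. Since the metric on $N$ has nonnegative scalar curvature that is positive at some point, every monopole in $\M([\mathfrak{a}], N_o)$ must be reducible in the blown-up sense.

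For the boundary-unstable critical point $[\mathfrak{a}] = u_{-1}$, I would argue exactly as in the paragraph preceding Proposition \ref{dim0}: monopoles in $\M(u_{-1}, W_o, [\mathfrak{a}_0])$ have exponential decay on both ends, so the Lichnerowicz formula on $W_o$ combined with the positivity of scalar curvature forces the spinor part to vanish. This yields $\M(u_{-1}, W_o, [\mathfrak{a}_0]) = \emptyset$, so the count is zero.

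For $[\mathfrak{a}] = v_0 = [A_0, 0, \phi_v^+]$ (recall $v = [A_0]$ is the minimum of the Morse function on $\mathbb{T}$), I would observe that the trivial flat connection $A_0$ on $V$ extends uniquely up to gauge to a flat $\Spinc$ connection on $N_o$, because the inclusion induces an isomorphism $H^1(N;\R) \to H^1(V;\R)$. Following the argument given for $y_0^1$ in Proposition \ref{dim0}, I would then identify the reducible moduli space with the projectivization $\C P^{d-1}$ of the complex $d$-dimensional kernel of the perturbed Dirac operator on $N_o$ with asymptotic condition matching $\phi_v^+$; the formal-dimension count forces $d = 1$, so $\M(v_0, W_o, [\mathfrak{a}_0]) \cong \C P^0$, a single point contributing $+1$ under the orientation induced from $N$ and the complex structure on $\C P^0$.

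The hard part will be the $v_0$ case: verifying that the weighted Dirac-operator kernel is exactly complex one-dimensional and that the induced orientation yields $+1$ rather than $-1$. This should reduce to a Fredholm-index computation tuned to the positive asymptotic eigenvalue $\lambda_v^+$ on the $S^1 \times S^2$ end, supplemented by Lichnerowicz on the PSC subregion to rule out a higher-dimensional kernel, together with a sign check through the blow-down map—both steps parallel the corresponding arguments in Proposition \ref{dim0}.
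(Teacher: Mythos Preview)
Your proposal is correct and follows essentially the same route as the paper: exponential decay plus positive scalar curvature kills the $u_{-1}$ case, and for $v_0$ the moduli space consists of reducibles over a single flat connection and is identified with $\C P^0$. The ``hard part'' you flag is already handled by the grading computation in Lemma~\ref{rg2}, which forces the moduli space to be zero-dimensional and hence $d=1$ immediately---no separate Fredholm-index or Lichnerowicz argument is needed for $v_0$, and the sign comes for free from the complex orientation on $\C P^0$.
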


\begin{proof}
When $[\mathfrak{a}]=u_{-1}$, $u_{-1}$ being boundary-unstable implies that monopoles in $\M_z(u_{-1}, W_o, [\mathfrak{a}_0])$ have exponential decay. Since the metrics on $W$ have positive scalar curvature, the top stratum consisting of irreducibles is empty. Thus 
\[
\# \M(u_{-1}, W_o, [\mathfrak{a}_0]) =0.
\]

When $[\mathfrak{a}]=v_0$, the moduli space $\M_z(v_0, W_o, [\mathfrak{a}_0])$ consists entirely of reducibles over a single flat connection, which is identified as $\C P^0$. Thus 
\[
\# \M(v_0, W_o, [\mathfrak{a}_0]) =1.
\]
\end{proof}

As in the end of last subsection, we identify $\partial N=S^1 \times S^2$ instead of $-S^1 \times S^2$. Removing a $4$-ball in $N$ gives us a cobordism $W': S^3 \to S^1 \times S^2$. We can also prove the analogue of Proposition \ref{dim01} in the case of $S^1 \times S^2$.

\begin{prop}
Let $W': S^3 \to S^1 \times S^2$ be the cobordism as above. Then 
\begin{enumerate}
\item[\upshape (i)] The critical points on $(S^1 \times S^2, \mathfrak{t})$ satisfying $\dim \M ([\mathfrak{a}_{-1}], W'_o, [\mathfrak{a}])=0$ are 
\[
v_0 \text{ and } u_{-1}
\]
\item[\upshape (ii)] The counting of the $0$-dimensional moduli spaces is given by 
\[
\# \M([\mathfrak{a}_{-1}], W'_o, [\mathfrak{a}]) =\left
\{ \begin{array}{ll}
 1 & \mbox{if $[\mathfrak{a}]=u_{-1}$} \\
 0 & \mbox{if $[\mathfrak{a}]=v_0$}.
  	\end{array}
\right.
\]
\end{enumerate}
\end{prop}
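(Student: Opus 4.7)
Following the approach of Proposition~\ref{dim01}, the plan is to carry out three steps: first compute the relative gradings to pin down the dimension-$0$ critical points, then rule out the boundary-stable case by positive scalar curvature, and finally count reducibles in the boundary-unstable case.

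For Step~1 I would establish a $W'$-analog of Lemma~\ref{rg2}. Consider the full finite-energy moduli space $\M(W'_o)$; its formal dimension is
\[
d = b_1(W') - b_0(W') - b^+(W') + \ind D_A^+ = 1 - 1 - 0 + 0 = 0,
\]
where $\ind D_A^+ = 0$ by the positive scalar curvature on $W' = (S^1 \times D^3) \setminus B^4$. Since finite-energy irreducibles have boundary-unstable incoming and boundary-stable outgoing asymptotics, the top stratum is identified with $\M_z([\mathfrak{a}_{-1}], W'_o, v_0)$, yielding $\gr_z([\mathfrak{a}_{-1}], W', v_0) = 0$. Additivity of the relative grading together with the cylindrical gradings in \cite[Section 36]{KM1} then determines $\gr_z([\mathfrak{a}_{-1}], W', [\mathfrak{a}])$ for the remaining critical points. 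Because $[\mathfrak{a}_{-1}]$ is boundary-unstable, no $+1$ correction enters the dimension formula, and a direct check shows that the grading vanishes precisely on $\{v_0,\, u_{-1}\}$, which establishes~(i).

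For Step~2, when $[\mathfrak{a}] = v_0$ is boundary-stable, every monopole in $\M([\mathfrak{a}_{-1}], W'_o, v_0)$ decays exponentially at both cylindrical ends (decay at the incoming end because $[\mathfrak{a}_{-1}]$ is boundary-unstable, decay at the outgoing end because $v_0$ is boundary-stable). Positive scalar curvature on $W'$ together with the Weitzenbock identity rules out irreducible solutions. The reducible sector is cut out by the kernel of the perturbed Dirac operator $D_{A_0, \mathfrak{q}}$ on $W'_o$ with weights enforcing decay at both ends, which is trivial for generic perturbation, giving $\# \M = 0$.

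For Step~3, when $[\mathfrak{a}] = u_{-1}$ is boundary-unstable, the underlying flat spin$^c$-connection $A_u$ on $S^1 \times S^2$ extends uniquely up to gauge to $N = S^1 \times D^3$ because the restriction map $H^1(N; \Z) \to H^1(\partial N; \Z)$ is an isomorphism $\Z \to \Z$ generated by the $S^1$-factor. As in Proposition~\ref{dim01}, the moduli space $\M([\mathfrak{a}_{-1}], W'_o, u_{-1})$ contains no irreducibles (those would require a boundary-stable outgoing asymptotic) and its reducible sector is identified with $\C P^0$ via the kernel of the perturbed Dirac operator at $A_u$, giving $\# \M = 1$. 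The main obstacle I expect is in Step~1, particularly verifying that the top stratum's outgoing asymptotic is $v_0$ rather than $u_0$; this requires careful bookkeeping of the rational grading convention and the Morse-theoretic structure of the Picard circle in \cite[Section 36]{KM1}.
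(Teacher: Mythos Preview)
Your proof is correct and follows essentially the same three-step approach as the paper's own (very terse) argument: compute $\gr_z([\mathfrak{a}_{-1}], W', v_0)=0$ via the index-theoretic formal dimension as in (\ref{2.14}), then treat the boundary-stable and boundary-unstable outgoing cases separately. The one refinement worth making is in Step~2: the reducible sector is empty not merely ``for generic perturbation'' but because positive scalar curvature and the Weitzenb\"ock formula directly kill any $L^2$ harmonic spinor on $W'_o$ (exactly the mechanism you already invoked for the irreducibles), which is what the paper means when it says ``we know that $\M([\mathfrak{a}_{-1}], W'_o, [\mathfrak{a}]) = \emptyset$'' by reference to the $T^3$ case.
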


\begin{proof}
Similar as in (\ref{2.14}) we get 
\begin{equation}
\gr_z([\mathfrak{a}_{-1}], W', v_0) =0.
\end{equation}
Then there are only two critical points $v_0$ and $u_{-1}$ satisfying the dimension condition.

When $[\mathfrak{a}]$ is boundary-stable, we know that $\M([\mathfrak{a}_{-1}], W'_o, [\mathfrak{a}]) = \emptyset$. Thus 
\[
\# \M([\mathfrak{a}_{-1}], W'_o, v_0)=0.
\]
We argue as before to identify $\M([\mathfrak{a}_{-1}], W'_o, u_{-1})$ with $\C P^0$. Thus 
\[
\# \M([\mathfrak{a}_{-1}], W'_o, u_{-1})=1.
\]
\end{proof}

\subsection{Counting Irreducibles}
This section is devoted to the proof of Proposition \ref{cirr}. 

\begin{prop}\label{count1}
Let $X=M \cup N$ be a decomposition of an integral homology $S^1 \times S^3$ as before, where $\partial M=-\partial N=V$. Suppose the spin Dirac operator 
\begin{equation}\label{iso}
D^+(X_o, g_o): L^2_1(X_o, W^+) \to L^2(X_o, W^-)
\end{equation}
is an isomorphism. 
Then there exists $T_1 >0$ such that for all $T > T_1$, and regular pairs $(g_T, \beta_T)$ of $X_T$ with $\beta_T \to \beta_o$, we have 
\begin{equation}
\# \M_{g_T, \beta_T}(X_T, \s) =
\left
\{ \begin{array}{ll}
 \# \M_{g_o, \beta_o}(M_o, y_0^1) & \mbox{if $V=T^3$} \\
 \# \M_{g_o, \beta_o}(M_o, v_0) & \mbox{if $V=S^1 \times S^2$}. 
  	\end{array}
	\right.
\end{equation}
\end{prop}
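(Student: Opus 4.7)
The plan is to run the standard neck-stretching compactness and gluing package for Seiberg--Witten monopoles from \cite[Chapters 24--26]{KM1}, and combine it with the explicit identification of $0$-dimensional moduli spaces on $N_o$ carried out in Propositions \ref{dim0} and its $S^1 \times S^2$ analogue.

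I would begin by fixing a sequence $T_n \to \infty$ together with monopoles $[\Gamma_n] \in \M_{g_{T_n}, \beta_{T_n}}(X_{T_n}, \s)$, and applying compactness for monopoles on manifolds with lengthening cylindrical necks. After passing to a subsequence, $[\Gamma_n]$ converges in the broken-trajectory sense to a limiting pair $([\Gamma_M], [\Gamma_N])$ with $[\Gamma_M] \in \M(M_o, [\mathfrak{a}])$ and $[\Gamma_N] \in \M([\mathfrak{a}], N_o)$ for some common critical point $[\mathfrak{a}] \in \mathfrak{C}(V, \mathfrak{t})$, together with a possibly empty sequence of unparametrized broken flow lines on $\R \times V$. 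The admissibility of $g$, together with the positive scalar curvature on $N$, then rules out purely reducible limit components: any such piece would have to pair with a nontrivial spinor that lifts to an element of $\ker D^+(X_o, g_o)$, contradicting the admissibility hypothesis.

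Next I would perform the dimension count. Since $\dim \M(X_T, \s) = 0$ and every nontrivial interpolating flow line on $\R \times V$ contributes strictly positive dimension, the broken trajectory must have no such interpolating piece, and the limit dimensions satisfy
\[
\dim \M(M_o, [\mathfrak{a}]) + \dim \M([\mathfrak{a}], N_o) = 0.
\]
Since each summand is nonnegative, both vanish. Combined with Lemmata \ref{rg} and \ref{rg2} together with Propositions \ref{dim0} and its $S^1 \times S^2$ analogue, this forces the asymptote $[\mathfrak{a}]$ to lie in $\{y_0^1, y_0^2, y_0^3\}$ when $V = T^3$ and in $\{v_0, u_{-1}\}$ when $V = S^1 \times S^2$, and the count $\# \M([\mathfrak{a}], N_o)$ is nonzero only when $[\mathfrak{a}] = y_0^1$ or $[\mathfrak{a}] = v_0$, in which case it equals $1$.

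Finally, I would invoke gluing in the other direction: for each pair $([\Gamma_M], [\Gamma_N]) \in \M(M_o, [\mathfrak{a}]) \times \M([\mathfrak{a}], N_o)$ with $[\mathfrak{a}]$ the distinguished critical point above, the Kronheimer--Mrowka gluing theorem produces, for all sufficiently large $T$, a unique nearby irreducible element of $\M_{g_T, \beta_T}(X_T, \s)$, and every element of the latter arises this way. Summing contributions yields
\[
\# \M_{g_T, \beta_T}(X_T, \s) = \# \M(M_o, [\mathfrak{a}]) \cdot \# \M([\mathfrak{a}], N_o) = \# \M(M_o, [\mathfrak{a}]),
\]
as claimed. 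The step I expect to be most delicate is the gluing at a reducible asymptote: $y_0^1$ and $v_0$ are boundary-stable in the sense of \cite[Chapter 24]{KM1}, so the fibred-product structure of the moduli space must be tracked carefully, and the extra neck-supported perturbations added at the start of this section must be shown not to introduce spurious contributions or alter the sign of the count. Verifying these points is the main technical content of the argument.
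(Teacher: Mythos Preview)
Your approach is essentially the same as the paper's: both run neck-stretching compactness and gluing, reduce to the $0$-dimensional pieces of the limiting fibre product, and then invoke Proposition~\ref{dim0} (and its $S^1\times S^2$ analogue) to isolate the single contributing asymptote.  Two small points where the paper is crisper: first, rather than running compactness and gluing separately, the paper packages both directions into a single homeomorphism $\rho_T\colon \M(X_T,\s)\to \M^+(X_o,\s)$ by citing \cite[Theorem~9.1]{LRS}, which is exactly where the admissibility hypothesis on $D^+(X_o,g_o)$ enters---your heuristic that a reducible limit would ``lift to an element of $\ker D^+(X_o,g_o)$'' is morally right but is not quite how the argument goes.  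Second, your claim that ``every nontrivial interpolating flow line on $\R\times V$ contributes strictly positive dimension'' is not literally correct in the blown-up setting (boundary-obstructed trajectories have dimension $\gr_z+1$); the paper instead allows $[\mathfrak a]\neq[\mathfrak b]$ a priori, invokes \cite[Proposition~26.1.6]{KM1} to force $[\mathfrak a]$ boundary-stable and $[\mathfrak b]$ boundary-unstable, and then kills this case by observing that $\M([\mathfrak b],N_o)=\varnothing$ for boundary-unstable $[\mathfrak b]$ (positive scalar curvature on $N$).  With these two adjustments your argument goes through and matches the paper's.
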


\begin{proof}
We only prove the case when $V=T^3$. The other case is the same. Following Proposition 26.1.4 in \cite{KM1} one can consider the compactified moduli space over $(X_o, \s)$: 
\begin{equation}\label{cption}
\M^+(X_o, \s):= \bigcup_{[\mathfrak{a}], [\mathfrak{b}]} \M(M_o, [\mathfrak{a}]) \times \breve{\M}^+([\mathfrak{a}], [\mathfrak{b}]) \times \M([\mathfrak{b}], N_o),
\end{equation}
where $\breve{\M}^+([\mathfrak{a}], [\mathfrak{b}])$ is the compactification of the unparametrized moduli space over $\R \times V$ as in \cite[Theorem 16.1.3]{KM1} consisting of unparametrized broken trajectories. Theorem 9.1 in \cite{LRS} provides us with a homeomorphisms 
\begin{equation}
\rho_{T}: \M(X_T, \s) \to \M^+(X_o, \s)
\end{equation}
for all $T$ greater than a sufficiently large $T_1>0$. Thus $ \# \M(X_T, \s) = \# \M^+(X_o, \s)$. 

Since $\dim \M^+(X_o, \s) =0$, only the top stratum is nonempty. Moreover each factor in the product of (\ref{cption}) has dimension equal to $0$. If $[\mathfrak{a}] \neq [\mathfrak{b}]$, \cite[Proposition 26.1.6]{KM1} tells us that $[\mathfrak{a}]$ and $[\mathfrak{b}]$ have to be boundary-stable and boundary-unstable respectively. From Proposition \ref{dim0} we conclude that $\M([\mathfrak{b}], N_o) = \emptyset$ when $\dim \M([\mathfrak{b}], N_o) =0$ and $[\mathfrak{b}]$ is boundary-unstable. Thus $[\mathfrak{a}] = [\mathfrak{b}]$, which has to be boundary-stable. Recall from Proposition \ref{dim0} when $V=T^3$ the only critical point making $\M([\mathfrak{a}], N_o) \neq \emptyset$ is $y_0^1$. Thus we conclude 
\[
\#\M^+(X_o, \s) = \# (\M(M_o, y_0^1) \times \M(y_0^1, N_o)) =  \# \M(M_o, y_0^1).
\]
\end{proof}

If we reverse the role of $M$ and $N$ we get an analogous result as follows. 

\begin{prop}\label{count2}
Let $X=N \cup M$ be a decomposition of an integral homology $S^1 \times S^3$ as before, where $\partial N=-\partial M=V$. Suppose the spin Dirac operator 
\begin{equation}\label{iso2}
D^+(X_o, g_o): L^2_1(X_o, W^+) \to L^2(X_o, W^-)
\end{equation}
is an isomorphism. 
Then there exists $T_2 >0$ such that for all $T > T_2$, and regular pairs $(g_T, \beta_T)$ of $X_T$ with $\beta_T \to \beta_o$, we have 
\[
\# \M_{g_T, \beta_T}(X_T, \s) =
\left
\{ \begin{array}{ll}
 \# \M(z^2_{-1}, M_o) +  \# \M(z^3_{-1}, M_o) & \mbox{if $V=T^3$} \\
 \# \M(u_{-1}, M_o) & \mbox{if $V=S^1 \times S^2$}. 
  	\end{array}
	\right.
\]
\end{prop}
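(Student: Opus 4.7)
The plan is to mirror the argument of Proposition \ref{count1}, with the roles of $M$ and $N$ interchanged. First, the admissibility hypothesis (\ref{iso2}) combined with Theorem 9.1 of \cite{LRS} supplies, for all $T$ larger than some $T_2$, a homeomorphism
\[
\rho_T \colon \M(X_T, \s) \longrightarrow \M^+(X_o, \s),
\]
where the compactified moduli space on the cylindrical-ended manifold $X_o = N_o \cup M_o$ decomposes as
\[
\M^+(X_o, \s) = \bigcup_{[\mathfrak{a}],[\mathfrak{b}]} \M(N_o, [\mathfrak{a}]) \times \breve{\M}^+([\mathfrak{a}], [\mathfrak{b}]) \times \M([\mathfrak{b}], M_o).
\]
It then suffices to enumerate the zero-dimensional pieces of this union.

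Since $\dim \M^+(X_o, \s) = 0$, each factor of a nonempty triple is zero-dimensional, so in particular $\dim \M(N_o, [\mathfrak{a}]) = 0$. Via the gluing identification $\M(N_o, [\mathfrak{a}]) \cong \M([\mathfrak{a}_{-1}], W'_o, [\mathfrak{a}])$ and Proposition \ref{dim01} (together with its $S^1 \times S^2$ analogue established above), the only candidates for $[\mathfrak{a}]$ are $z^1_{-1}, z^2_{-1}, z^3_{-1}, x_{-1}$ in the torus case, or $v_0, u_{-1}$ in the sphere case. Among these, the ones with nontrivial count are $z^2_{-1}, z^3_{-1}$ (respectively $u_{-1}$), all sitting at index $i = -1$ and hence boundary-unstable.

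For $[\mathfrak{a}] \neq [\mathfrak{b}]$, Proposition 26.1.6 of \cite{KM1} forces $[\mathfrak{a}]$ to be boundary-stable and $[\mathfrak{b}]$ boundary-unstable; this is incompatible with our list, so these pieces are empty. Only $[\mathfrak{a}] = [\mathfrak{b}]$ survives, and in that case $\breve{\M}^+([\mathfrak{a}], [\mathfrak{a}])$ is the constant trajectory, a single point. Therefore
\[
\# \M^+(X_o, \s) = \sum_{[\mathfrak{a}]} \#\M(N_o, [\mathfrak{a}]) \cdot \#\M([\mathfrak{a}], M_o),
\]
and inserting the counts from Proposition \ref{dim01}(ii) (and its sphere analogue) yields $\#\M(z^2_{-1}, M_o) + \#\M(z^3_{-1}, M_o)$ when $V = T^3$ and $\#\M(u_{-1}, M_o)$ when $V = S^1 \times S^2$. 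The essential analytic input is already packaged into Proposition \ref{dim01} and the compactness/gluing package of \cite{KM1, LRS}; the present proposition is a formal reassembly, and the only delicate point is verifying that no boundary-obstructed broken trajectories leak into the zero-dimensional count, which the boundary-unstable character of the relevant critical points rules out.
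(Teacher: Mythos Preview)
Your proposal is correct and follows essentially the same route as the paper's proof: invoke the gluing homeomorphism from \cite{LRS} under the admissibility hypothesis, decompose $\M^+(X_o,\s)$ as the fiber product over critical points, reduce to the diagonal $[\mathfrak{a}]=[\mathfrak{b}]$ via Proposition~26.1.6 of \cite{KM1}, and read off the surviving contributions from Proposition~\ref{dim01}. The paper's version is terser (it simply says ``analyzing as before'' to import the $[\mathfrak{a}]=[\mathfrak{b}]$ reduction from Proposition~\ref{count1}), while you spell out explicitly that the candidates with nonzero $N_o$-count are boundary-unstable and hence cannot serve as the boundary-stable $[\mathfrak{a}]$ required in a boundary-obstructed broken piece; this is the same argument, just unpacked.
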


\begin{proof}
We only prove the case when $V= T^3$. The other case is proved in the same way. As in the proof of Proposition \ref{count1} we consider the compactified moduli space 
\begin{equation}\label{cption}
\M^+(X_o, \s):= \bigcup_{[\mathfrak{a}], [\mathfrak{b}]} \M(N_o, [\mathfrak{a}]) \times \breve{\M}^+([\mathfrak{a}], [\mathfrak{b}]) \times \M([\mathfrak{b}], M_o).
\end{equation}
Moreover for all $T$ greater than a sufficiently large $T_2 >0$, due to (\ref{iso2}) we identify 
\[
\# \M(X_T, \s) = \# \M^+(X_o, \s).
\]
Analyzing as before we see that in the union $[\mathfrak{a}]=[\mathfrak{b}]$. By Proposition \ref{dim01} to get nonzero counting of $\M(N_o, [\mathfrak{a}]) \cup \M([\mathfrak{a}], M_o)$, it's necessary that $[\mathfrak{a}]=z^2_{-1}$, or $z^3_{-1}$. In both cases $\# \M(N_o, [\mathfrak{a}])=1$. Thus 
\[
\# \M^+(X_o, \s)= \M(z^2_{-1}, M_o) +  \# \M(z^3_{-1}, M_o).
\]
\end{proof}

Now we are ready to give a proof for Proposition \ref{cirr}. 

\begin{proof}[Proof of Proposition \ref{cirr}]
We only prove the case when $V=T^3$. The argument for the case of $S^1 \times S^2$ is the same, and simpler in details. We consider the compactified moduli space 
\begin{equation}\label{cptionm}
\M^+(X_o, \s):= \bigcup_{[\mathfrak{a}], [\mathfrak{b}]} \M(M_{1, o}, [\mathfrak{a}]) \times \breve{\M}^+([\mathfrak{a}], [\mathfrak{b}]) \times \M([\mathfrak{b}], M_{2, o}).
\end{equation}
The invertibility of $D^+(X_o, g_o)$ enables us to apply the gluing theorem \cite[Theorem 9.1]{LRS} to identify $\# \M(X_T, \s)= \# \M^+(X_o, \s)$ for $T$ is greater than some $T_0 >0$ large enough. Now we need to study the compactified space $\M^+(X_o, \s)$ in more details.

When $[\mathfrak{a}] \neq [\mathfrak{b}]$, due to the fact that $\dim \M^+(X_o, \s)=0$ $[\mathfrak{a}]$ and $[\mathfrak{b}]$ have to be boundary-stable and boundary-unstable respectively. In this case $ \breve{\M}^+([\mathfrak{a}], [\mathfrak{b}]) = \emptyset $ following from Lemma 36.1.1 in \cite{KM1}.

When $[\mathfrak{a}]=[\mathfrak{b}]$, each component in $\M^+(X_o, \s)$ has the form $\M(M_{1, o}, [\mathfrak{a}]) \times \M([\mathfrak{a}], M_{2, o}).$ Thus 
\[
\# \M^+(X_o, \s)= \sum_{[\mathfrak{a}]} \# \M(M_{1, o}, [\mathfrak{a}]) \cdot \# \M([\mathfrak{a}], M_{2, o}).
\]

If $[\mathfrak{a}]$ is boundary-stable, to get nonzero counting $[\mathfrak{a}]=y^1_0$ from Remark \ref{dim0c}. In this case $\M(y^1_0, M_{2, o})$ has dimension $0$, and consists of reducible monopoles. Note that the restriction map $H^1(M_2; \R) \to H^1(\partial M_2; \R)$ is injective. Thus up to gauge there is a unique flat connection on $M_2$ restricting to the connection given by $y^1_0$. Due to dimension reason, we can identify $\M(y^1_0, M_{2, o})$ with $\C P^0$ so that $\# \M(y^1_0, M_{2, o})=1$. 

If $[\mathfrak{a}]$ is boundary-unstable, Remark \ref{dim0c} implies that only when $[\mathfrak{a}]=z^2_{-1}$, or $z^3_{-1}$ can we get nonzero counting. The same argument as above gives us that 
\[
\# \M(M_{1, o}, z^j_{-1})=1, \; j=2, 3.
\]
Thus we conclude that 
\begin{equation}
\begin{split}
\# \M^+(X_o, \s) & = \# \M(M_{1, o}, y^1_0) + \# \M(z^2_{-1}, M_{2, o}) + \# \M(z^3_{-1}, M_{2, o}) \\
                            & = \# \M(X_{1, T}, \s_1) + \# \M(X_{2, T}, \s_2),
\end{split}
\end{equation}
where the second line of the equality follows from Proposition \ref{count1} and Proposition \ref{count2}.
\end{proof}

\section{Comparing Index Correction Terms}\label{CICT}

The second half of the proof of the main results is to compare the index correction terms of the manifolds before and after applying fiber sums. The main ingredient of the proof is the excision principle over end-periodic manifolds proved in \cite[Section 5]{M1}, which we briefly recall here. 

Let $(Z_+, \s)$ be a spin$^c$ $4$-manifold obtained from an integral homology $S^1 \times S^3$ which we denote by $X$ as before. Suppose we can decompose $Z_+$ as a union of two end-periodic open sets, i.e. $Z_+=P \cup Q$, where $P$ and $Q$ restricted to the end $W_+$ are invariant under the $\Z$-translation. Moreover we want $P \cap Q = (-1, 1) \times \tilde{V} \subset Z_+$ is a tubular neighborhood of an embedded end-periodic $3$-manifold $\tilde{V}$ where the restriction of the metric $g$ on $Z_+$ is of product form. We write $V \subset X$ for the projection of the end of $\tilde{V}$ to the $4$-manifold $X$. Since the metric is a product, we can insert cylinders $[-T, T] \times \tilde{V}$ and $[-T, T] \times V$ to $Z_+$ and $X$ respectively. We denote the results by $Z_{+, T}$ and $X_T$. 

Suppose now we are given two sets of such data $Z_{1, +}=P_1 \cup Q_1$, $Z_{2, +}=P_2 \cup Q_2$ as above together with perturbed Dirac operators (as in (\ref{pdo}))
\begin{align*}
D^+_{\beta_1}(Z_{1, +}) : & L^2_1(Z_{1, +}, W^+_1) \to L^2(Z_{1, +}, W^-_1) \\
D^+_{\beta_2}(Z_{2, +}) : & L^2_1(Z_{2, +}, W^+_2) \to L^2(Z_{2, +}, W^-_2).
\end{align*}
We say those two sets of data are excisable if the spin$^c$ structures on the overlaps $(-1, 1) \times \tilde{V}_1$ and $(-1, 1) \times \tilde{V}_2$ are identified as well as the twisted Dirac operators:
\[
D^+_{\beta_1}((-1, 1) \times \tilde{V}_1) \cong D^+_{\beta_2}((-1, 1) \times \tilde{V}_2). 
\]
Then we can form two new end-periodic manifolds 
\[
\tilde{Z}_{1, +}=P_1 \cup Q_2, \; \tilde{Z}_{2, +} = P_2 \cup Q_1,
\]
and perturbed Dirac operators $D^+_{\tilde{\beta}_1}(\tilde{Z}_{1, +})$, $D^+_{\tilde{\beta}_2}(\tilde{Z}_{1, +})$. At last we assume that all the Dirac operators on the end-periodic manifolds are Fredholm. A criterion for elliptic operators on an end-periodic manifold extending as a Fredholm operator from $L^2_1$ to $L^2$ is derived in \cite[Lemma 4.3]{T1}. In our case it's equivalent to the regularity of the pairs $(g_i, \beta_i)$ (see also \cite{MRS}). 

\begin{thm}(\cite[Theorem 5.4]{M1})
Given two sets of excisable data as above, we assume that the metrics $g_1$ and $g_2$ are both admissible in the sense of Definition \ref{dadm}. Then there exists $T_3 >0$, and perturbations $\beta_{i, T}$ converging to $\beta_{i, o}$ such that for all $T > T_3$ one has 
\[
\ind D^+_{\beta_{1, T}}(Z_{1, +, T}) + \ind D^+_{\beta_{2, T}}(Z_{2, +, T}) = \ind D^+_{\tilde{\beta}_{1, T}}(\tilde{Z}_{1, +, T}) +  \ind D^+_{\tilde{\beta}_{2, T}}(\tilde{Z}_{2, +, T}) 
\]
\end{thm}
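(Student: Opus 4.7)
The plan is to reduce the identity to an additivity property of indices under neck-stretching, so that when the four pieces $P_1, Q_1, P_2, Q_2$ are recombined in either way the total index is unchanged. Concretely, for $T$ large, each $Z_{i,+,T}$ contains a long product cylinder $[-T,T]\times\tilde V_i$, and I want to cut along $\{0\}\times\tilde V_i$, attach semi-infinite product cylinders, and view $Z_{i,+,T}$ as built from two end-periodic $4$-manifolds with an additional cylindrical end modelled on $\tilde V_i$, call them $P_{i,o}$ and $Q_{i,o}$. Since excisability identifies the overlap Dirac operators on $(-1,1)\times\tilde V_1$ and $(-1,1)\times\tilde V_2$, after cutting the pieces $Q_1$ and $Q_2$ (resp.\ $P_1$ and $P_2$) can be interchanged to produce $\tilde Z_{1,+,T}$ and $\tilde Z_{2,+,T}$, built from the same four cylindrical-end pieces.

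First I would establish the Fredholm property for the perturbed Dirac operator $D^+_{\beta_{i,o}}$ on $P_{i,o}$ and $Q_{i,o}$. These manifolds have two kinds of ends: the end-periodic end modelled on $W_+$, which is Fredholm by the criterion of \cite[Lemma 4.3]{T1} as soon as the regular pair $(g_i,\beta_i)$ is chosen (Remark \ref{reg}); and the new cylindrical end modelled on the end-periodic $3$-manifold $\tilde V_i$, whose Fredholm property follows from the admissibility hypothesis on $g_i$ together with the product form of the metric and perturbation there. Next I would invoke the gluing/neck-stretching machinery of \cite[Theorem 9.1]{LRS} (applied to the linearized Dirac problem rather than the monopole equation) together with the invertibility (\ref{iso}) of $D^+(X_o,g_o)$, to produce, for $T>T_3$ and for suitably chosen perturbations $\beta_{i,T}\to\beta_{i,o}$, a linear gluing isomorphism on the stretched cylinder. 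Standard Fredholm addition then yields the decomposition
\[
\ind D^+_{\beta_{i,T}}(Z_{i,+,T}) = \ind D^+_{\beta_{i,o}}(P_{i,o}) + \ind D^+_{\beta_{i,o}}(Q_{i,o}),
\]
and the analogous identity for $\tilde Z_{i,+,T}$ with $Q_{i,o}$ swapped.

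Finally, summing the two decompositions over $i=1,2$ and rearranging gives the desired equality, since the right-hand side of the $Z_{i,+,T}$ decomposition and that of the $\tilde Z_{i,+,T}$ decomposition consist of exactly the same four summands $\ind D^+(P_{1,o}),\ind D^+(Q_{1,o}),\ind D^+(P_{2,o}),\ind D^+(Q_{2,o})$. The admissibility condition enters twice, once to guarantee Fredholmness of the cut pieces and once to guarantee that the neck-stretching produces an actual invertible model operator on the long cylinder, so that the index is genuinely additive rather than additive only up to a defect.

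\textbf{Main obstacle.} The hardest step is the additivity in neck-stretching, Step 3 above: although neck-stretching arguments for the monopole moduli space in \cite{LRS} and \cite{KM1} have been developed, here one needs their analogue at the level of the linear Fredholm theory for the perturbed Dirac operator in the end-periodic setting. The subtlety is that $\tilde V_i$ is itself an end-periodic $3$-manifold with end $V$, so the standard cylindrical-end Atiyah-Patodi-Singer framework does not directly apply; one must argue via weighted Sobolev spaces on the $\tilde V_i$-end and verify that the weight can be chosen so that (i) the Dirac operator on $[0,\infty)\times\tilde V_i$ remains invertible and (ii) convergence $\beta_{i,T}\to\beta_{i,o}$ can be arranged compatibly with this weight. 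This is essentially the content extracted from \cite[Section 5]{M1}.
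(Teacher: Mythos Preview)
The paper does not supply a proof of this statement; it is quoted as \cite[Theorem 5.4]{M1} and immediately used as a black box in Propositions \ref{aic1} and \ref{aic2}. There is therefore no in-paper argument to compare your proposal against.

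On its own merits, your outline is a plausible route to excision: split each $Z_{i,+,T}$ into two pieces with an additional cylindrical end, invoke an index-additivity formula across the long neck, and rearrange the four resulting summands. You have also correctly isolated the genuine difficulty: the separating hypersurface $\tilde V_i$ is itself noncompact (end-periodic), so the standard APS cylindrical-end package does not apply directly, and the admissibility hypothesis---which is phrased in terms of the $4$-dimensional operator $D^+(X_{i,o})$---does not obviously deliver the invertibility of the $3$-dimensional operator on $\tilde V_i$ that your clean additivity formula
\[
\ind D^+_{\beta_{i,T}}(Z_{i,+,T}) = \ind D^+_{\beta_{i,o}}(P_{i,o}) + \ind D^+_{\beta_{i,o}}(Q_{i,o})
\]
would require. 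Your proposal in effect defers this step back to \cite[Section 5]{M1}, which is precisely where the work lives; so as written it is more a restatement of what must be proved than an independent proof. A route that sidesteps this issue, closer to the classical Donaldson--Kronheimer excision, is to avoid assigning indices to the individual open pieces altogether: patch local parametrices on $P_1, Q_1, P_2, Q_2$ to build a parametrix for $D^+(Z_{1,+,T}) \oplus D^+(Z_{2,+,T})$, observe that the same local data assembles into a parametrix for the swapped sum $D^+(\tilde Z_{1,+,T}) \oplus D^+(\tilde Z_{2,+,T})$, and read off equality of total index from the compactly supported error terms.
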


Now we can use the excision principle to compare the index correction terms. We follow the set-up in Section \ref{CIM}. 

\begin{prop}\label{aic1}
Given two sets of data $X_1=M_1 \cup N_1$ and $X_2=N_2 \cup M_2$ as in Proposition \ref{cirr} with $\partial M_1= -\partial M_2=T^3$, we form the fiber sum $X=M_1 \cup M_2$. Suppose the metrics $g_1 \in \Met(X_1, h)$, $g_2 \in \Met(X_2, h)$ are both admissible. Moreover we choose the spin structures $\s_i$ on $X_i$ so that $\s_1|_{\partial M_1} = \s_2|_{\partial M_2}$. Then there exists $T_4 >0$ such that for all $T > T_4$ and regular pairs $(g_{i, T}, \beta_{i, T})$ of $X_{i, T}$ we have 
\begin{equation}
\omega(X_T, g_T, \beta_T)=\omega(X_{1, T}, g_{1, T}, \beta_{1, T}),
\end{equation}
provided the induced pair $(g_T, \beta_T)$ is regular as well. 
\end{prop}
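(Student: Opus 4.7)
The proof will apply the end-periodic excision principle \cite[Theorem 5.4]{M1} to compare the index-plus-signature quantities defining $\omega(X_T)$ and $\omega(X_{1,T})$. The admissibility hypotheses on $g_1$ and $g_2$ ensure Fredholmness of the end-periodic Dirac operators, while the matching of spin structures across $\partial M_1 = -\partial M_2$ provides the compatibility needed to perform the swap.

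Following Lemma 2.4 of \cite{M1}, I choose generating hypersurfaces $Y_1 \subset X_1$ and $Y_2 \subset X_2$ intersecting $\mathcal{T}_1$, $\mathcal{T}_2$ transversely in knots $K_1, K_2$; then $Y = Y_1 \#_K Y_2$ is a generating hypersurface for $X = M_1 \cup M_2$. I pick spin fillings $Z_1$ of $Y_1$ and $Z$ of $Y$ of the form $Z = Z_1 \cup Z'$, where $Z'$ is a spin cobordism realizing the splicing, so that $\sigma(Z) = \sigma(Z_1) + \sigma(Z')$; the independence of $\omega$ from the filling makes this legitimate. The neck $V = T^3$ lifts to an end-periodic $3$-manifold $\tilde V$ inside both $Z_+(X_T)$ and $Z_+(X_{1,T})$, along which the metric is of product form $dt^2 + h$. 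Cutting along $\tilde V$ gives
\begin{equation*}
Z_+(X_T) = P \cup Q_X, \qquad Z_+(X_{1,T}) = P \cup Q_{X_1},
\end{equation*}
where the common $P$ contains the $M_1$-sided end-periodic pieces together with $Z_1$, and $Q_X, Q_{X_1}$ are the end-periodic extensions on the $M_{2,T}$-side and $N_{1,T}$-side respectively. The overlap is a tubular neighborhood of $\tilde V$ on which the twisted Dirac operators are identified.

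Since the direct swap of $Q_X$ and $Q_{X_1}$ is tautological, I complement it with a second excisable pair of end-periodic completions of $X_{2,T}$ whose fillings differ precisely by $Z'$. Applying \cite[Theorem 5.4]{M1} to both pairs in succession and invoking the admissibility of $g_2$ (which makes $D^+(X_{2,o}, g_{2,o})$ an isomorphism, killing the residual index contributions up to a controllable signature difference), I arrive at
\begin{equation*}
\ind_{\mathbb{C}} D^+_{\beta_T}(Z_+(X_T)) + \tfrac{\sigma(Z)}{8} = \ind_{\mathbb{C}} D^+_{\beta_{1,T}}(Z_+(X_{1,T})) + \tfrac{\sigma(Z_1)}{8},
\end{equation*}
which is the claim. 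The main obstacle is engineering the second excisable pair so its index contributions recover exactly the signature discrepancy $-\sigma(Z')/8$ between the fillings $Z$ and $Z_1$; this requires translating the cylindrical-end admissibility of $g_2$ into an end-periodic index identity on the $X_{2,T}$ side, which I expect to carry out using the neck-stretching and gluing arguments of \cite[Section 5]{M1}, together with careful topological bookkeeping of spin structures under the fiber-sum gluing map $\varphi_{\mathcal{T}}$.
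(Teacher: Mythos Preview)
There is a genuine gap. The displayed identity in the proposition is a typo: the paper's own argument (and the role this proposition plays in proving Theorem~\ref{main1}) make clear that the intended conclusion is the additive one, $\omega(X_T)=\omega(X_{1,T})+\omega(X_{2,T})$. You have taken the literal statement as your goal, and your proposed mechanism for eliminating the $X_2$-contribution---that admissibility of $g_2$ ``kills the residual index contributions up to a controllable signature difference''---is incorrect. Admissibility of $g_2$ says only that the Dirac operator on the \emph{cylindrical}-end manifold $X_{2,o}$ is invertible; this is a technical input to the gluing and excision theorems, not a vanishing statement for $\omega(X_{2,T})$, which is in general nonzero. Your excision scaffolding is also underspecified: asserting that $Z_+(X_T)$ and $Z_+(X_{1,T})$ share a common piece $P$ after cutting along a lift $\tilde V$ of $T^3$ requires the separating hypersurface inside the compact piece $Z=Z_1\cup Z'$ to extend exactly as it does inside $Z_1$, which is not automatic for an unspecified splicing cobordism $Z'$; and the ``second excisable pair'' is never written down.

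The paper's route is different and supplies the missing geometric idea. One chooses each filling $Z_i$ \emph{specially} so that $K_i\subset Y_i=\partial Z_i$ bounds a disk $D_i\subset Z_i$ (attach a $2$-handle along $K_i$ with the given framing and cap off), then punctures at the center of $D_i$ so that $D_i^c$ becomes a concordance from the unknot in $S^3$ to $K_i$ in $Y_i$ and $Z^c_{i,+}$ acquires an extra cylindrical $S^3$-end. Excision is then applied \emph{symmetrically} to $Z^c_{1,+}=P_1\cup Q_1$ and $Z^c_{2,+}=Q_2\cup P_2$, where $Q_i$ is a tubular neighborhood of the concordance together with the $N_i$-periodic part. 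The swap yields $Z^c_+=P_1\cup P_2$ (end-periodic on $X$, with an extra $S^1\times S^2$ cylindrical end) and a leftover $\tilde Q=Q_1\cup Q_2\cong\mathbb{R}\times S^1\times S^2$, which carries positive scalar curvature and hence has vanishing Dirac index. One more excision caps the $S^1\times S^2$ end of $Z^c_+$, and Novikov additivity handles the signatures. The key point is not to make $\omega(X_{2,T})$ disappear, but to arrange the swap so that the \emph{extra} manifold produced by excision has positive scalar curvature.
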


\begin{proof}
We will drop the stretching parameter $T$, perturbations, and metrics in the notation. Whenever we apply the excision principle we need a larger parameter. There are only finitely many times of excision, which gives us a lower bound $T_4$. Once metrics are chosen for $X_1$ and $X_2$, all other metrics are induced then. Whenever we need regularity, we choose generic perturbations $\beta_i$ to achieve this. For each fixed $T$, only finitely many steps are needed to achieve regularity. Being regular is an open condition, thus we can choose $\beta_{i, T}$ to vary continuously with respect to $T$. 

Recall $N_1$ is a tubular neighborhood of an embedded torus $\mathcal{T}_1 \subset X_1$. From the text below Definition \ref{dfn1}, we choose the generating hypersurface $Y_1 \subset X_1$ intersecting the torus $\mathcal{T}_1$ transversely into a knot $K_1$. Now we attach a $2$-handle to $K_1$ with framing given by the identification $N_1 \cong D^2 \times T^2$, and then take a spin $4$-manifold to cap off the boundary. In this way we have found a spin $4$-manifold $(Z_1, \s_1)$ with spin boundary $(Y_1, \mathfrak{t}_1)$ so that $K_1$ bounds a disk $D_1$, which is the core of the attaching $2$-handle, inside $Z_1$. Now we remove a small $4$-ball centered at the center of $D_1$. Applying the excision principle to the excisable data 
\[
(Z_{1, +}=D^4 \cup Z_+ \backslash D^4, D^+(Z_{1, +})), \; (\bar{D}^4_-=(-\infty, 0] \times S^3 \cup \bar{D}^4, D^+(\bar{D}^4_-)),
\]
where $\bar{D}_-^4$ is the orientation-reversed $4$-ball attached by a negative cylindrical end $(-\infty, 0] \times S^3$, which is equipped with the unique spin structure, we get 
\begin{equation}
\ind D^+(Z_{1, +}) +\ind D^+(\bar{D}^4_-) = \ind D^+ (Z^c_{1, +}) + \ind D^+(S^4),
\end{equation}
where $Z^c_{1, +}=(- \infty, 0] \times S^3 \cup Z_{1, +} \backslash D^4$. Due to the positivity of scalar curvature on $D^4$ and $S^4$, we get 
\begin{equation}
\ind D^+(Z_{1, +})= \ind D^+(Z^c_{1, +}). 
\end{equation}
Repeat the whole process to $(X_2, \mathcal{T}_2, \s_2)$ we get 
\begin{equation}
\ind D^+(Z_{2, +}) = \ind D^+(Z^c_{2, +}). 
\end{equation}

Now the punctured disk $D^c_1: \mathcal{U} \to K_1$ is a concordance from the unknot in $S^3$ to $K_1$ in $Y_1$. Let's denote by $\pi_1: W_{1, +} \to X_1$ be the projection of the periodic end on $Z_{1, +}$ and by $N_{K_1}$ a tubular neighborhood of $(-\infty, 0] \times U_1 \cup D^c_1$. Let $Q_1=N_{K_1} \cup \pi_1^{-1}(N_1) \subset Z^c_{1, +}$. We then get a decomposition $Z^c_{1, +}=P_1 \cup Q_1$. Repeat the process to $(Z^c_{2, +}, K_2)$ we get a decomposition $Z^c_{2, +}=Q_2 \cup P_2$. 

Applying the excision principle to the excisable data 
\[
(Z^c_{1, +}=P_1 \cup Q_1, D^+(Z^c_{1, +})), \; (Z^c_{2, +}=Q_2 \cup P_2, D^+(Z^c_{2, +})),
\]
we get 
\begin{equation}
\ind D^+(Z^c_{1, +}) + \ind D^+(Z^c_{2, +})= \ind D^+(Z^c_+) + \ind D^+(\tilde{Q}), 
\end{equation}
where $Z^c_+=P_1 \cup P_2$, $\tilde{Q}=Q_1 \cup Q_2$. From the construction $\tilde{Q}$ is diffeomorphic to $\R \times S^1 \times S^2$ equipped with metrics of positive scalar curvature. Thus 
\begin{equation}
\ind D^+(Z^c_{1, +}) + \ind D^+(Z^c_{2, +})= \ind D^+(Z^c_+).
\end{equation}
Note that $Z^c_+$ has a cylindrical end of the form $(-\infty, 0] \times S^1 \times S^2$. Apply one more excision to this end with respect to the pair $S^1 \times S^3 = \overline{S^1 \times D^3} \cup S^1 \times D^3$, we get $Z_+$ with a cylindrical end modeled on the fiber sum $X$ such that
\begin{equation}
\ind D^+(Z^c_+) = \ind D^+(Z_+),
\end{equation}
where the vanishing of the other terms are caused the fact that $S^1 \times S^3$ admits an orientation-reversing diffeomorphism. Thus we get 
\begin{equation}
\begin{split}
\ind D^+(Z_{1, +}) + \ind D^+(Z_{2, +}) & =\ind D^+(Z^c_{1, +}) + \ind D^+(Z^c_{2, +}) \\
 & = \ind D^+(Z^c_+)=\ind D^+(Z_+). 
\end{split}
\end{equation}

Let $Z$ be the compact part of $Z_+$. From the construction we know that $Z$ is obtained as follows. We first remove a $4$-ball $D_i$ in $Z_i$ to get a cobordism $W^c_i: S^3 \to Y_i$ with an embedded concordance $D^c_i: \mathcal{U} \to K_i$ for each $i=1, 2$, then remove neighborhoods of $D^c_i$ in $W^c_i$, and glue the resulting manifolds accordingly to get a cobordism $W^c: S^1 \times S^2 \to Y$, finally fill $W^c$ with $S^1 \times D^3$ to get $Z$. By the additivity of signature, the manifolds we removed or filled in the first, second, and fourth steps all have signature $0$. Thus the gluing in the third step gives the additivity of signature:
\begin{equation}
\sigma(Z_1) + \sigma(Z_2) = \sigma(Z).
\end{equation}
\end{proof}

We also prove the corresponding additivity of the correction term in the case of fiber summing along curves. 

\begin{prop}\label{aic2}
Given two sets of data $X_1=M_1 \cup N_1$ and $X_2=N_2 \cup M_2$ as in Proposition \ref{cirr} with $\partial M_1= -\partial M_2=S^1 \times S^2$, we form the fiber sum $X=M_1 \cup M_2$. Suppose the metrics $g_1 \in \Met(X_1, h)$, $g_2 \in \Met(X_2, h)$ are both admissible. Moreover we choose the spin structures $\s_i$ on $X_i$ so that $\s_1|_{\partial M_1} = \s_2|_{\partial M_2}$. Then there exists $T_4 >0$ such that for all $T > T_4$ and regular pairs $(g_{i, T}, \beta_{i, T})$ of $X_{i, T}$ we have 
\begin{equation}
\omega(X_T, g_T, \beta_T)=\omega(X_{1, T}, g_{1, T}, \beta_{1, T}),
\end{equation}
provided the induced pair $(g_T, \beta_T)$ is regular as well. 
\end{prop}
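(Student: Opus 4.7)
The plan is to mimic the excision strategy that establishes Proposition \ref{aic1}, replacing the embedded torus $\mathcal{T}_i$ and its transverse knot $K_i$ with the embedded curve $\gamma_i$ and its transverse intersection point $p_i = \gamma_i \cap Y_i$. The natural analogue of the equality to prove is the additive formula $\omega(X_T,g_T,\beta_T) = \omega(X_{1,T},g_{1,T},\beta_{1,T}) + \omega(X_{2,T},g_{2,T},\beta_{2,T})$, which is obtained by running the same chain of excisions and combining with the additivity of signature. As in the proof of Proposition \ref{aic1}, the stretching parameter $T_4$ is chosen large enough that the excision principle applies at each of the finitely many steps, and the perturbations $\beta_{i,T}$ are chosen generically (and varied continuously in $T$) to preserve regularity.

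First, for each $i$, I would pick a generating hypersurface $Y_i\subset X_i$ meeting $\gamma_i$ transversely in a single point $p_i$, which is possible since $1_{X_i}\cdot[\gamma_i]=1$. Pick any spin null-cobordism $(Z_i,\s_i)$ of $(Y_i,\mathfrak{t}_i)$ and an embedded arc $\alpha_i$ in $Z_i$ from $p_i\in\partial Z_i$ to an interior point $q_i$. In the end-periodic manifold $Z_{i,+} = Z_i\cup W_{i,+}$, the curve $\gamma_i$ lifts to a properly embedded half-line $\ell_i$ in $W_{i,+}$ starting at $p_i$; concatenating with $\alpha_i$ produces a proper arc in $Z_{i,+}$ with interior endpoint $q_i$. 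Removing a small $4$-ball centered at $q_i$ and attaching a negative cylindrical end $(-\infty,0]\times S^3$ yields $Z^c_{i,+}$ together with an embedded line that now goes through the added end as $(-\infty,0]\times\{*\}$. The first excision, applied exactly as in Proposition \ref{aic1} to the pair $(Z_{i,+},\bar D^4_-)$ and using positivity of scalar curvature on $D^4$ and $S^4$, yields $\ind D^+(Z_{i,+}) = \ind D^+(Z^c_{i,+})$.

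Next, decompose $Z^c_{i,+} = P_i\cup Q_i$, where $Q_i$ is a tubular neighborhood of the above proper line (including its extensions into both the cylindrical end and the periodic end, where it agrees with $\pi_i^{-1}(\nu(\gamma_i))$). Topologically $Q_i\cong \R\times D^3$ with $\partial Q_i \cong \R\times S^2$, and by assumption $\s_1|_{\partial M_1}=\s_2|_{\partial M_2}$ so the spin$^c$ structures (and after generic choice the perturbed Dirac operators) on $(-1,1)\times\R\times S^2$ agree in the two cases, making the data excisable. Applying the second excision swaps $Q_1$ and $Q_2$, producing $Z^c_+ = P_1\cup P_2$ and $\tilde Q = Q_1\cup Q_2$. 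Gluing two copies of $\R\times D^3$ along $\R\times S^2$ gives $\tilde Q\cong \R\times S^3$, which carries a product metric of positive scalar curvature, so $\ind D^+(\tilde Q)=0$ and the excision yields $\ind D^+(Z^c_{1,+})+\ind D^+(Z^c_{2,+}) = \ind D^+(Z^c_+)$. The manifold $Z^c_+$ carries an extra cylindrical end $(-\infty,0]\times S^3$ inherited from gluing the two added ends of $Z^c_{i,+}$; a final excision against the pair $S^4 = D^4\cup\bar D^4$ (both with PSC) caps this end off and produces $Z_+$, the end-periodic manifold associated to the fiber sum $X$, with $\ind D^+(Z^c_+) = \ind D^+(Z_+)$.

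Stringing the three excisions together gives $\ind D^+(Z_{1,+}) + \ind D^+(Z_{2,+}) = \ind D^+(Z_+)$. For the signature, the compact $Z$ is built from $Z_1$ and $Z_2$ by removing $4$-balls (signature $0$), filling in $D^4$'s and a piece diffeomorphic to a PSC product on $S^3\times[0,1]$ (signature $0$), so Novikov additivity yields $\sigma(Z)=\sigma(Z_1)+\sigma(Z_2)$. Combining gives $\omega(X_T,g_T,\beta_T) = \omega(X_{1,T},g_{1,T},\beta_{1,T}) + \omega(X_{2,T},g_{2,T},\beta_{2,T})$. The main obstacle is verifying that the tubular neighborhoods and the overlaps match as end-periodic spin$^c$ manifolds so that the excision principle of \cite[Theorem 5.4]{M1} applies: in particular, identifying $\tilde Q$ with a PSC product $\R\times S^3$ requires a careful choice of framings and of the extension arcs $\alpha_i$ so that the spin$^c$ structures and metrics on the two $Q_i$'s agree on the overlap, and the admissibility hypothesis on $g_i$ is what makes the relevant Dirac operators Fredholm at every stage.
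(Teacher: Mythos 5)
Your proof is correct, and you rightly read the displayed equality in the proposition as an abbreviated (indeed, apparently truncated) version of the additive formula $\omega(X_T)=\omega(X_{1,T})+\omega(X_{2,T})$, which is what the main theorem needs and what the paper's own proof actually establishes. However, you take a noticeably longer route than the paper. The paper applies the excision principle exactly once: it takes $P_i = D_i \cup \pi_i^{-1}(N_i)$ where $D_i$ is a half-ball neighborhood of the boundary intersection point $p_i = \gamma_i\cap Y_i \subset Y_i=\partial Z_i$; then $P_1\cup P_2 \cong D^4_+ = D^4\cup[0,\infty)\times S^3$ carries PSC metrics and has vanishing index, while $Q_1\cup Q_2 = Z_+$ is the desired end-periodic model for the fiber sum. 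The signature additivity follows at once because $Z$ is the boundary connect sum $Z_1 \natural Z_2$. Your version instead transplants the three-step pattern from Proposition \ref{aic1} — excise an interior ball to create a spare cylindrical end, swap the cylinder neighborhoods $Q_i\cong \R\times D^3$ of a proper arc, then cap off the residual $(-\infty,0]\times S^3$ end. Those extra steps are genuinely needed in the torus case, where $\mathcal{T}_i$ is a closed surface and one must puncture the core disk to turn it into a concordance from the unknot; but for a curve hitting $Y_i$ transversely in one point, the half-ball neighborhood $D_i$ at the boundary plays the role your interior ball plus added end would play, so the preliminary excision and the final capping are superfluous. Both routes reach the correct conclusion; the paper's is shorter and keeps the overlap $\tilde V$ diffeomorphic to $\R^3$ (a disk capping a half-infinite tube over $S^2$), whereas your overlap is $\R\times S^2$, which is also fine but requires you to check the product-metric and spin$^c$ matching across both the cylindrical and periodic directions, as you correctly flag at the end.
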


\begin{proof}
For the same reason as in the proof of Proposition \ref{aic1} we will drop the neck-stretching parameter $T$, metrics, and perturbations in the notation. 

Now $N_1$ is a tubular neighborhood of an embedded curve $\gamma_1$ which intersects the generating hypersurface $Y_1$ positively into a single point. Choose a spin $4$-manifold $(Z_1, \s_1)$ with spin boundary $(Y_1, \mathfrak{t}_1)$, we remove a neighborhood $D_1$ of the intersection point in $Z_1$. Let $P_1=D_1 \cup \pi_1^{-1}(N_1) \subset Z_{1, +}$, and write the closure of the complement of $P_1$ to be $Q_1$. We repeat the construction on $(X_2, \gamma_2)$ to get a decomposition $Z_{2, +}=Q_2 \cup P_2$. 

Applying the excision principle to the excisable data 
\[
(Z_{1, +}=P_1 \cup Q_1, D^+(Z_{1, +})) \text{ and } (Z_{2, +}=Q_2 \cup P_2, D^+(Z_{2, +}))
\]
gives us 
\[
\ind D^+(Z_{1, +}) + \ind D^+(Z_{2, +}) = \ind D^+(Z_+) + \ind D^+(D^4_+),
\]
where $D^4_+ \cong D^4 \cup [0, \infty) \times S^3$, $Z_+=Q_1 \cup Q_2$ has a periodic end modeled on the fiber sum $X$. Since $D^4_+$ admits metrics of positive scalar curvature, we conclude 
\begin{equation}
\ind D^+(Z_{1, +}) + \ind D^+(Z_{2, +}) = \ind D^+(Z_+).
\end{equation}
Note that $Z$ is the boundary sum of $Z_1$ and $Z_2$, thus we get the additivity of the signature: 
\begin{equation}
\sigma(Z_1) + \sigma(Z_2) = \sigma(Z).
\end{equation}
\end{proof}

\section{Examples}\label{app}

In this section we discuss some examples which illustrate how the fiber sums we have been considering naturally arise.

\begin{exm}
Let $S^3_1(K)$ be the $3$-manifold obtained by performing $1$-surgery along a knot $K$ in $S^3$. The meridian of $K$ in $S^3$ becomes a knot $K'$ in $S^3_1(K)$. Then we get an embedded torus $\mathcal{T}_K=S^1 \times K' \subset S^1 \times S^3_1(K)$. Let $(X, \mathcal{T})$ be another pair in the construction of fiber sums along tori. The fiber sum formula gives us that 
\begin{equation}
\lambda_{SW}(X \#_{\mathcal{T}} S^1 \times S^3_1(K)) = \lambda_{SW}(X) - {1 \over 2} \Delta''_K(1),
\end{equation}
where $\Delta_K(t)$ is the symmetric Alexander polynomial of $K$. Here we have used the fact that 
\[
\lambda_{SW}(S^1 \times S^3_1(K)) = -\lambda (S^3_1(K))=-{1 \over 2} \Delta''_K(1). 
\]
\end{exm}

\begin{exm}
Consider the $n$-fold branched cover $\Sigma_n(K)$ of a knot $K$ in $S^3$. Denote by $\tau: \Sigma_n(K) \to \Sigma_n(K)$ the generating covering transformation of order $n$. We denote by $X_n(K)$ the mapping torus of $\Sigma_n(K)$ under this map:
\[
X_n(K)=[0, 1] \times \Sigma_n(K) / (0, y) \sim (1, \tau(y)).
\]
It's not hard to show that $X_n(K)$ has the same integral homology as $S^1 \times S^3$. Note that in \cite{LRS1} the Casson-Seiberg-Witten invariant of $X_n(K)$ is computed as 
\[
\lambda_{SW}(X_n(K)) = -{1 \over 8} \sum_{m=0}^{n-1} \sign^{m/n}(K),
\]
where $\sign^{m/n}(K)$ is the Tristram-Levine signature (see \cite[Section 6]{RS1}). In particular when $n=2$, we get 
\[
\lambda_{SW}(X_2(K))=-{1 \over 8} \sigma(K), 
\]
where $\sigma(K)$ is the knot signature. 

Let $\tilde{K} \subset \Sigma_n(K)$ be the branching set. Pick up a point $y_0 \in \tilde{K}$, there is a natural embedded curve $\gamma_K \subset X_n(K)$ given by closing the first factor $[0, 1]$ in the mapping torus at $y_0$. There is a preferred framing of $\gamma_K$ given as follows. Let's choose a $\tau$-invariant neighborhood $U_{y_0}$ of $y_0$ in $\Sigma_n(K)$, which we identify as $[0, 1] \times D^2$ where $\tau$ acts as
\[
\tau(s, re^{i \theta}) \mapsto (s, re^{i (\theta+ {2 \pi \over n})}).
\]
Then a tubular neighborhood $\nu(\gamma_K) \subset [0, 1] \times \Sigma_n(K) / \sim$ of $\gamma_K$ is identified as 
\begin{equation}
\begin{split}
\nu(\gamma_K) & \longrightarrow S^1 \times I \times D^2 \\
[t, s, re^{i\theta}] & \longmapsto ([t], s, re^{i( \theta+ t {2 \pi \over n})}).
\end{split}
\end{equation}

On the other hand, $\mathcal{T}_K=S^1 \times \tilde{K}$ gives us an embedded torus in $X_n(K)$, which is endowed with a preferred framing as follows. We identify a $\tau$-invariant neighborhood of $\tilde{K}$ with $S^1 \times D^2$ from that of $K$ in $S^3$, where $\tau$ acts as 
\[
\tau(e^{is}, re^{i\theta}) = (e^{is}, re^{i(\theta+ {2\pi \over n})}). 
\]
Then a tubular neighborhood $\nu(\mathcal{T}_K)$ is identified as 
\begin{equation}
\begin{split}
\nu(\mathcal{T}_K) & \longmapsto S^1 \times S^1 \times D^2 \\
[t, e^{is}, re^{i\theta}] & \longmapsto ([t], e^{is}, re^{i(\theta+t{2\pi \over m})}). 
\end{split}
\end{equation}

\begin{cor}Let $(X_n(K), \gamma_K)$ and $(X_n(K), \mathcal{T}_K)$ be the pairs as above. 
\begin{enumerate}
\item[\upshape (i)] Let $(X, \gamma)$ be a pair as in forming the fiber sum along curves. Then 
\[
\lambda_{SW}(X \#_{\gamma} X_n(K)) = \lambda_{SW}(X)-{1 \over 8} \sum_{m=0}^{n-1} \sign^{m/n}(K).
\]
\item[\upshape (ii)] Let $(X, \mathcal{T})$ be a pair as in forming the fiber sum along tori. Then 
\[
\lambda_{SW}(X \#_{\mathcal{T}} X_n(K)) = \lambda_{SW}(X)-{1 \over 8} \sum_{m=0}^{n-1} \sign^{m/n}(K).
\]
\end{enumerate}
\end{cor}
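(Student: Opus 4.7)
The plan is to apply Theorem \ref{main1} and Theorem \ref{main2} directly, so the corollary reduces to verifying the hypotheses of the fiber sum constructions for $(X_n(K), \gamma_K)$ and $(X_n(K), \mathcal{T}_K)$, and then quoting the Lin-Ruberman-Saveliev formula $\lambda_{SW}(X_n(K)) = -\frac{1}{8}\sum_{m=0}^{n-1}\sign^{m/n}(K)$ already recorded in the excerpt.

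First I would confirm that $X_n(K)$ is an integral homology $S^1 \times S^3$: whenever $\Sigma_n(K)$ is an integral homology sphere, the Wang sequence for the mapping torus of $\tau$ yields $H_*(X_n(K); \Z) \cong H_*(S^1 \times S^3; \Z)$. The mapping torus projection $X_n(K) \to S^1$ furnishes the distinguished generator $1_{X_n(K)} \in H^1(X_n(K); \Z)$ that will be used to verify the remaining pairing hypotheses.

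For (i), the curve $\gamma_K$ closes up $[0,1] \times \{y_0\}$, so it pairs to $1$ with $1_{X_n(K)}$; this is exactly the hypothesis needed in Theorem \ref{main2}, and the explicit trivialisation written in the excerpt realises $\nu(\gamma_K) \cong S^1 \times D^3$. Applying Theorem \ref{main2} gives $\lambda_{SW}(X \#_{\gamma} X_n(K)) = \lambda_{SW}(X) + \lambda_{SW}(X_n(K))$, and substituting the signature sum produces the stated identity. For (ii), since $\tilde{K}$ is pointwise fixed by $\tau$, the product $\mathcal{T}_K = S^1 \times \tilde{K}$ is a genuinely embedded torus in $X_n(K)$, and the inclusion-induced map $H_1(\mathcal{T}_K; \Z) \to H_1(X_n(K); \Z)$ is surjective because the $S^1$-factor of $\mathcal{T}_K$ already maps onto the generator of $H_1(X_n(K); \Z) \cong \Z$. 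Theorem \ref{main1} then yields additivity and the signature formula closes out (ii).

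The only step requiring real attention is checking that the neighborhood identifications spelled out in the excerpt, which absorb the $2\pi/n$ rotation of $\tau$ into the last coordinate, produce the preferred framing demanded by (\ref{fram}): one needs the meridian $\mu = \partial D^2$ to lie in the kernel of $H_1(\partial \nu(\mathcal{T}_K)) \to H_1(M)$ and the longitude along the first $S^1$-factor to be dual to $1_{X_n(K)}$. This is a direct verification in the basis $\{\mu, \lambda, \gamma\}$ using the explicit coordinates of the excerpt, and once it is done the corollary follows immediately from the two main theorems.
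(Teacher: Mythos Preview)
Your approach is exactly the one the paper intends: the corollary is stated without proof because it is an immediate consequence of Theorems \ref{main1} and \ref{main2} together with the formula $\lambda_{SW}(X_n(K)) = -\frac{1}{8}\sum_{m=0}^{n-1}\sign^{m/n}(K)$ quoted from \cite{LRS1}. Two small slips are worth fixing. First, you hedge with ``whenever $\Sigma_n(K)$ is an integral homology sphere,'' but the paper asserts (and uses) that $X_n(K)$ is an integral homology $S^1\times S^3$ for \emph{every} knot $K$; the Wang sequence argument works because $1-\tau_*$ is an isomorphism on $H_1(\Sigma_n(K);\Z)$, not because that group vanishes. Second, in your framing check you write that $\mu=\partial D^2$ must lie in $\ker\big(H_1(\partial\nu(\mathcal{T}_K))\to H_1(M)\big)$, but condition (\ref{fram}) requires $[\lambda]\in\ker i_*$, not $[\mu]$; the meridian bounds in $N$, and it is the longitude $\lambda$ (the $\tilde K$-direction) that must die in $M$.
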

\end{exm}

\begin{exm}
Let $Y_1$ and $Y_2$ be two integral homology spheres. We fix basepoints $y_1 \in Y_1$ and $y_2 \in Y_2$. Let $c \hookrightarrow Y_1$ be a simple closed curve based at $y_1$ representing a non-central element in $\pi_1(Y_1, y_1)$. The preferred framing on $c$ identifies a tubular neighborhood of $c$ as $\nu(c) \cong S^1 \times D^2$. Let $A \subset D^2$ be the annulus with outer radius $1$, and inner radius ${1 \over 2}$. Consider a map on $S^1 \times A \subset \nu(c)$ given by 
\begin{align*}
f'_c: S^1 \times A & \longrightarrow S^1 \times A \\
(s, re^{i\theta}) & \longmapsto (s+ \rho(r), re^{i\theta}),
\end{align*}
where $\rho: [0, 1] \to [0, 1]$ is a smooth decreasing function satisfying $\rho(1)=0, \rho(r)=1$ for $r \leq {1 \over 2}$ with vanishing derivatives on both ends. Then we extend this map to $Y_1$ as identify on the rest, which we still denote by $f'_c$. Connect-summing $Y_1$ and $Y_2$ at $y_1$ and $y_2$ respectively on balls of radius $\epsilon$ with $0 < \epsilon < {1 \over 2}$, we can then extend the map $f'_c$ to 
\[
f_c: Y_1 \# Y_2 \longrightarrow Y_1 \# Y_2,
\]
which can be visualized as dragging $Y_2$ along the curve $c$. We denote the mapping torus $Y_1 \# Y_2$ under the map $f_c$ by $X_c$. Note that the map $f_c'$ is homotopic to the identity map via 
\begin{align*}
f'_{c, t}: S^1 \times D^2 & \longrightarrow S^1 \times D^2 \\
(s, re^{i\theta}) & \longmapsto (s+ 2t\rho(r), re^{i\theta}),
\end{align*}
where $t \in [0, {1 \over 2}]$. 

Let $\alpha \subset Y_1$ be the curve $S^1 \times \{y_1\}$. Then a representative $\gamma_1$ of the class $[\alpha] * [c] \in \pi_1(S^1 \times Y_1)$ can be choosen as follows. Write $t \in [0, 1]$ for the coordinate on the $S^1$-factor of $S^1 \times Y_1$. The basepoint of $S^1 \times Y_1$ is choosen as $(0, y_1)$. Under the identification $\nu(c) \cong S^1 \times D^2$, $y_1$ is gven by $(0, 0)$. We pick up $y'_1 \in \nu(c)$ so that $y'_1$ is identified with $(0, 1)$. Take an arc connecting $y_1$ to $y'_1$ given by $a(t)=(0, 2t)$, $t \in [0, {1 \over 2}]$, in $\nu(c)$. Then $\gamma_1$ lying in $S^1 \times \nu(c)$ is given by 
\[
\gamma_1(t) =\left
\{ \begin{array}{ll}
 f'_{c, t}(a(t)) & \mbox{if $t \in [0, {1 \over 2}]$ } \\
 a(1-t) & \mbox{if $ t \in [{1 \over 2}, 1]$.} 
  	\end{array}
	\right.
\]
Let $\gamma_2=S^1 \times \{y_2\}$ be the curve in $Y_2$. Then one sees that the mapping torus $X_c$ is the fiber sum of $(S^1 \times Y_1, \gamma_1)$ and $(S^1 \times Y_2, \gamma_2)$. Thus our result implies that 
\begin{equation}\label{isw}
\lambda_{SW}(X_c)=\lambda(Y_1) + \lambda (Y_2).
\end{equation}
In particular when $[c] \in \pi_1(Y_1, y_1)$ is of inifinite order, the map $f_c$ has infinite order in the mapping class group of $Y_1 \# Y_2$ (c.f. \cite{HL}). Thus (\ref{isw}) gives an example of computing the Casson-Seiberg-Witten invariant in the case of a mapping torus formed by an infinite order diffeomorphism. Note that the computation for a large class of mapping torus given by finite order diffeomorphisms was carried out in \cite{LRS1}. 

We note here that in principle the mapping torus $X_c$ is different from the product $X=S^1 \times Y_1 \# Y_2$. One can see this from their fundamental groups. From the construction we know that 
\[
\pi_1(X_c)= \langle \mathfrak{h}, \mathfrak{a} \in \pi_1(Y_1), \mathfrak{b} \in \pi_1(Y_2) | \mathfrak{h} \mathfrak{a} \mathfrak{h}^{-1} = \mathfrak{c} \mathfrak{a} \mathfrak{c}^{-1}, \mathfrak{h} \mathfrak{b} \mathfrak{h}^{-1}= \mathfrak{b} \rangle,
\]
where $\mathfrak{c}=[c] \in \pi_1(Y_1)$. On the other hand 
\[
\pi_1(X)= \langle \mathfrak{h} \rangle \oplus (\pi_1(Y_1) * \pi_1(Y_2)). 
\]
If both $\pi_1(Y_1)$ and $\pi_1(Y_2)$ have trivial center, then $\pi(X_c)$ has trivial center, but $\pi_1(X)$ does not. 
\end{exm}


\bibliographystyle{plain}
\bibliography{References}

\end{document}